\documentclass[a4paper,12pt]{article}

\topmargin=-2.5cm
\oddsidemargin=-1cm
\textheight=26cm
\textwidth=17cm

\usepackage{amsfonts,graphicx,subfig,amsmath,amssymb,enumitem,
,url,color,authblk,amsthm,multicol,cite,float}


\DeclareMathOperator\erf{erf}
\DeclareMathOperator\erfc{erfc}

\theoremstyle{definition}

\theoremstyle{theorem}
\newtheorem{theorem}{Theorem}[section]
\theoremstyle{lemma}
\newtheorem{lemma}{Lemma}[section]
\theoremstyle{corollary}
\newtheorem{corollary}{Corollary}[section]
\theoremstyle{remark}
\newtheorem{remark}{Remark}


\title{Similarity solution for a two-phase one-dimensional Stefan problem with a convective boundary condition and a mushy zone model}

\author[1,2]{Andrea N. Ceretani\thanks{aceretani@austral.edu.ar}}
\author[1]{Domingo A. Tarzia\thanks{dtarzia@austral.edu.ar}}
\affil[1]{{\small CONICET - Depto. Matem\'atica, Facultad de Ciencias Empresariales, Universidad Austral, Paraguay 1950, S2000FZF Rosario, Argentina.}}
\affil[2]{{\small Depto. de Matem\'atica, Facultad de Ciencias Exactas, Ingenier\'ia y Agrimensura, Universidad Nacional de Rosario, Pellegrini 250, S2000BTP Rosario, Argentina.}}
\date{}

\begin{document}  
\maketitle

\begin{abstract}
A two-phase solidification process for a one-dimensional semi-infinite material is considered. It is assumed that it is ensued from a constant bulk temperature present in the vicinity of the fixed boundary, which it is modelled through a convective condition (Robin condition). The interface between the two phases is idealized as a mushy region and it is represented following the model of Solomon, Wilson and Alexiades. An exact similarity solution is obtained when a restriction on data is verified, and it is analysed the relation between the problem considered here and the problem with a temperature condition at the fixed boundary. Moreover, it is proved that the solution to the problem with the convective boundary condition converges to the solution to a problem with a temperature condition when the heat transfer coefficient at the fixed boundary goes to infinity, and it is given an estimation of the difference between these two solutions. Results in this article complete and improve the ones obtained in Tarzia, Compt. Appl. Math., 9 (1990), 201-211.
\end{abstract}

\section{Introduction}

Phase-change processes involving solidification or melting are present in a large number of phenomena related to physics, engineering, chemistry, etc. and they have been widely studied since several decades. Some reference books in the subject are \cite{AlSo1993,Ca1984, Cr1984,Fa2005,Gu2003,Lu1991,Ru1971} and a rewiew of a long bibliography on moving and free boundary value problems for the heat equation can be consulted in \cite{Ta2000}. Sometimes,  
liquid in solidification processes is cooled until the phase-change temperature without becoming solid. This implies the presence of a region in the phase-change process containing the material at a special solid-liquid state, which is known as {\em mushy region} \cite{AlSo1993,Cr1984,Gu2003}. In this article, we consider a one-dimensional semi-infinite homogeneous material undergoing a two-phase solidification process with a mushy zone. 
This sort of problems were studied in \cite{Ta1990} for boundary conditions of Dirichlet or heat flux type. We follow it, which is inspired by the model given for Solomon, Wilson and Alexiades in \cite{SoWiAl1982} for the one-phase case, to represent the mushy region. Encouraged by the recent relation between the classical (absence of mushy zone) two-phase Stefan problems with temperature and convective boundary conditions \cite{Ta2015-a}, we consider here the following free boundary value problem:
\begin{subequations}\label{PbCondConv}
\begin{align}
\label{EqTheta1Conv}&\alpha_1\theta_{1_{xx}}(x,t)=\theta_{1_{t}}(x,t)&0<x<s(t),&\quad t>0\\
\label{EqTheta2Conv}&\alpha_2\theta_{2_{xx}}(x,t)=\theta_{2_{t}}(x,t)&x>r(t),&\quad t>0\\
\label{s0r0}&s(0)=r(0)=0& &\\
\label{MushyTemp}&\theta_1(s(t),t)=\theta_2(r(t),t)=0& &\quad t>0\\
\label{CondInicEInf}&\theta_2(x,0)=\theta_2(+\infty,t)=\theta_0&x>0,&\quad t>0\\
\label{MushyCalorLat}&k_1\theta_{1_x}(s(t),t)-k_2\theta_{2_x}(r(t),t)=\rho l[\epsilon\dot{s}(t)-(1-\epsilon) \dot{r}(t)]& &\quad t>0\\
\label{MushyAncho}&\theta_{1_x}(s(t),t)(r(t)-s(t))=\gamma& &\quad t>0\\
\label{CondConv}&k_1\theta_{1_x}(0,t)=\frac{h_0}{\sqrt{t}}\left(\theta_1(0,t)+D_\infty\right)& &\quad t>0
\end{align}
\end{subequations}
 
\noindent where the unknowns are:

\vspace{0.25cm}

\begin{tabular}{rll}
$\theta_1:$& temperature of the solid region & [$^\circ$C]\\
$\theta_2:$& temperature of the liquid region & [$^\circ$C]\\
$s:$& free boundary separating the mushy zone and the solid phase& [m]\\
$r:$& free boundary separating the mushy zone and the liquid phase& [m]\\
\end{tabular}

\vspace{0.25cm}

\noindent the physical parameters involved in the model are:

\vspace{0.25cm}

\begin{tabular}{rll}
$\rho>0:$& mass density & [kg/m$^3$]\\
$k>0:$& thermal conductivity & [W/(m$^\circ$C)]\\
$c>0:$& specific heat& [J/(kg$^\circ$C)]\\
$l>0:$& latent heat per unit mass& [J/kg]\\
$0<\epsilon<1:$& coefficient characterizing the amount of latent heat\\& contained in the mushy region& [dimensionless]\\
$\gamma>0:$ & coefficient characterizing the width of the mushy region& [$^\circ$C]\\
$\theta_0>0:$ & initial temperature of the material& [$^\circ$C]\\
$-D_\infty<0:$ & external bulk temperature at the boundary $x=0$& [$^\circ$C]\\
$h_0>0:$& coefficient characterizing the heat transfer at the\\& boundary $x=0$&[kg/($^\circ$C s$^{5/2}$)]\\
$\alpha=\frac{k}{\rho c}>0:$& thermal diffusivity&[$m^2\,s^{-1}$]
\end{tabular}

\vspace{0.25cm}

\noindent and the subscripts 1 and 2 refer to solid and liquid phases, respectively.

We note that we are making the following assumptions on the mushy region \cite{Ta1990,Ta2015-b,SoWiAl1982}:
\begin{enumerate}
\item It is isothermal at the phase-change temperature, which we are considering equal to 0 $^\circ$C.
\item It contains a fixed portion of the total latent heat per unit mass (see condition (\ref{MushyCalorLat})).
\item Its width is inversely proportional to the gradient of temperature (see condition (\ref{MushyAncho})).
\end{enumerate}

We also observe that, by considering the convective boundary condition (\ref{CondConv}), we are thinking of a solidification process ensued due to the constant temperature $-D_\infty$ present in the vicinity of the fixed boundary $x=0$ of the material, which is often represented through physically less appropriate boundary conditions of Dirichlet type \cite{CaJa1959}. Convective boundary conditions have been also used in the context of phase-change processes in, for example, \cite{ZuCh1994, Be1991, CaKw2009,Fo1978,GrHePlSl2013,HuSh1975,Lu2000,RoKa2009,SaSiCoLe2009,
SiGuRa2011,WuWa1994,BrTa1998,Bo1990}. Especially, a heat transfer coefficient inversely proportional to the square root of time it was also considered in \cite{ZuCh1994}.

In the following (Sect. \ref{SectEyU}), we give a characterization for the existence and uniqueness of an explicit similarity solution to problem (\ref{PbCondConv}) in terms of the existence and uniqueness of a positive solution to a transcendental equation. We then prove that it has only one solution if and only if data verify a certain condition. Then (Sect. \ref{RelacTempConv}), we analyse the relation of problem (\ref{PbCondConv}) with the problem (\ref{PbCondConv}$^\star$) given by (\ref{EqTheta1Conv})-(\ref{MushyAncho}) and the following temperature boundary condition:
\begin{equation}\label{CondTempD0}
\theta_1(0,t)=-D_0,\quad t>0\quad\quad(D_0>0)\tag{1h$^\star$}\text{,}
\end{equation}
and we establish when both problems are equivalent. 
Finally (Sect. \ref{SectAssymp}), we prove that the solution to problem (\ref{PbCondConv}) converges to the solution to problem (\ref{PbCondConv}$^\star$)$_\infty$, that is the special case of problem (\ref{PbCondConv}$^\star$) in which the temperature boundary condition is given by:
\begin{equation*}\label{CondTempDInfty}
\hspace*{6cm}\theta_1(0,t)=-D_\infty,\quad t>0,\hspace{5cm}(1\text{h}^\star)_\infty
\end{equation*}
when the heat transfer coefficient goes to infinity. Moreover, we obtain that the difference between the two solutions is $\mathcal{O}\left(\frac{1}{h_0}\right)$ when $h_0\to\infty$.

\section{Existence and uniqueness of solution}\label{SectEyU}

In this section we will look for a similarity solution to problem (\ref{PbCondConv}). By following the classical method of Neumann \cite{RiWe1912}, that is, by introducing the similarity variables:
\begin{equation*}
\eta_1=\frac{x}{2\sqrt{\alpha_1}t}\quad\text{and}\quad
\eta_2=\frac{x}{2\sqrt{\alpha_2}t}
\end{equation*}
and proposing a solution defined by:
\begin{align*}
&\theta_1(x,t)=\theta_1(\eta_1)&0<x<s(t),&\quad t>0\\
&\theta_2(x,t)=\theta_2(\eta_2)&x>r(t),&\quad t>0\\
&s(t)=2\xi\sqrt{\alpha_1t}& &\quad t>0\\
&r(t)=2\mu\sqrt{\alpha_2t}& &\quad t>0
\end{align*}
with $\xi$ and $\mu$ positive numbers to be determined, we obtain that $\theta_1$ and $\theta_2$ must be given by:
\begin{align*}
&\theta_1(x,t)=A_1+B_1\erf(\eta_1)&0<x<s(t),&\quad t>0\\
&\theta_2(x,t)=A_2+B_2\erf(\eta_2)&x>r(t),&\quad t>0
\end{align*}
where $A_1$, $A_2$, $B_1$, $B_2$ are real numbers that must be specified from conditions (\ref{MushyTemp})-(\ref{CondConv}), and $\erf$ is the {\em error function} defined by:
\begin{equation*}
\erf(x)=\frac{2}{\sqrt{\pi}}\displaystyle\int_0^x\exp(-y^2)\,dy,\quad x>0.
\end{equation*}

\noindent Through conditions (\ref{MushyTemp}), (\ref{CondConv}) we obtain that:
\begin{equation*}
A_1=-\frac{D_\infty\erf(\xi)}{\erf(\xi)+\frac{k_1}{h_0\sqrt{\alpha_1\pi}}}
\quad\text{and}\quad
B_1=\frac{D_\infty}{\erf(\xi)+\frac{k_1}{h_0\sqrt{\pi\alpha_1}}}\text{,}
\end{equation*} 
and from conditions (\ref{MushyTemp}), (\ref{CondInicEInf}) that:
\begin{equation*}
A_2=-\frac{\theta_0\erf(\mu)}{\erfc(\mu)}
\quad\text{and}\quad
B_2=\frac{\theta_0}{1-\erfc(\mu)}\text{,}
\end{equation*}
where $\erfc$ is the {\em complementary error function} defined by:
\begin{equation*}
\erfc(x)=1-\erf(x),\quad x>0.
\end{equation*} 
Exploiting condition (\ref{MushyAncho}) we have that the parameters $\xi$ and $\mu$, which characterize the two free boundaries of the mushy region, are related as:
\begin{equation}\label{mu}
\mu=\sqrt{\alpha_{12}}W(\xi)\text{,}
\end{equation}
where $\alpha_{12}$ is the number defined by:
\begin{equation*}
\alpha_{12}=\frac{\alpha_1}{\alpha_2}>0
\end{equation*} 
and $W$ is the function defined by:
\begin{equation}\label{W}
W(x)=x+\frac{\gamma\sqrt{\pi}}{2D_\infty}\exp(x^2)\left(\erf(x)+\frac{k_1}{h_0\sqrt{\alpha_1\pi}}\right),\quad x>0.
\end{equation}

\noindent Finally, through condition (\ref{MushyCalorLat}), we have that $\xi$ must be such that:
\begin{equation*}
F(\xi)=\frac{l\sqrt{\pi}}{D_\infty c_1}G(\xi)\text{,}
\end{equation*}
where $F$ and $G$ are the functions defined by:
\begin{subequations}\label{FG}
\begin{align}
\label{F}&F(x)=\frac{\exp(-x^2)}{\erf(x)+\frac{k_1}{h_0\sqrt{\alpha_1\pi}}}-
\frac{\theta_0\sqrt{k_2c_2}}{D_\infty\sqrt{k_1c_1}}\frac{\exp\left(-\alpha_{12}W^2(x)\right)}{\erfc\left(\sqrt{\alpha_{12}}W(x)\right)}&x>0\\
\label{G}&G(x)=x+\frac{(1-\epsilon)\gamma\sqrt{\pi}}{2D_\infty}\exp(x^2)\left(\erf(x)+\frac{k_1}{h_0\sqrt{\alpha_1\pi}}\right)&x>0.
\end{align}
\end{subequations}

Then, we have the following result:
\begin{theorem}\label{ThCharact}
The Stefan problem (\ref{PbCondConv}) has the similarity solution $\theta_1$, $\theta_2$, $s$, $r$ given by:
\begin{subequations}\label{SolConv}
\begin{align}
&\theta_1(x,t)=-\frac{D_\infty\erf(\xi)}{\erf(\xi)+\frac{k_1}{h_0\sqrt{\pi\alpha_1}}}\left(1-\frac{\erf\left(\frac{x}{2\sqrt{\alpha_1 t}}\right)}{\erf(\xi)}\right)&0<x<s(t),&\quad t>0\\
&\theta_2(x,t)=\frac{\theta_0\erf(\mu)}{\erfc(\mu)}\left(\frac{\erf\left(\frac{x}{2\sqrt{\alpha_2 t}}\right)}{\erf(\mu)}-1\right)&x>r(t),&\quad t>0\\
&s(t)=2\xi\sqrt{\alpha_1 t} &&\quad t>0\\
&r(t)=2\mu\sqrt{\alpha_2 t} &&\quad t>0
\end{align}
\end{subequations}
with $\mu$ given by (\ref{mu}), if and only if $\xi$ is a solution to the equation:
\begin{equation}\label{EqXi}
F(x)=\frac{l\sqrt{\pi}}{D_\infty c_1}G(x)\text{,}\quad x>0,
\end{equation}
where $F$ and $G$ are the functions defined in (\ref{FG}).
\end{theorem}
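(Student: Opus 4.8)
Here is the plan.

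The argument is a direct verification built on the Neumann similarity reduction already carried out above. The plan is to show that, for a given $\xi>0$, the quadruple $(\theta_1,\theta_2,s,r)$ defined by (\ref{SolConv}) together with $\mu$ defined by (\ref{mu}) satisfies every equation of (\ref{PbCondConv}) precisely when $\xi$ solves (\ref{EqXi}). The key structural observation is that all the conditions except (\ref{MushyCalorLat}) hold \emph{identically in $t$} once the constants $A_1,B_1,A_2,B_2$ are taken as above and $\mu$ is linked to $\xi$ by (\ref{mu}); hence the whole system collapses to the single condition (\ref{MushyCalorLat}), and the proof amounts to showing that this condition is equivalent to (\ref{EqXi}).

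First I would dispatch the conditions that hold by construction. The heat equations (\ref{EqTheta1Conv})--(\ref{EqTheta2Conv}) hold because each $\theta_i$ is an affine function of the error function of the Neumann variable $\eta_i$, and such functions solve $\alpha_i u_{xx}=u_t$; the required smoothness of $\theta_1,\theta_2$ on their domains is immediate from these explicit formulas. Condition (\ref{s0r0}) is immediate from $s(t)=2\xi\sqrt{\alpha_1 t}$ and $r(t)=2\mu\sqrt{\alpha_2 t}$. Condition (\ref{MushyTemp}) holds because $\eta_1=\xi$ at $x=s(t)$ and $\eta_2=\mu$ at $x=r(t)$, and (\ref{SolConv}) is written precisely so that the factors in parentheses vanish there. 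Condition (\ref{CondInicEInf}) follows from $\erf(+\infty)=1$ together with $\eta_2\to+\infty$ as $x\to+\infty$ and as $t\to0^+$, which is exactly the choice that makes $A_2+B_2=\theta_0$. I would also record here that $\xi>0$ forces $\mu=\sqrt{\alpha_{12}}\,W(\xi)>0$, and, since $W(x)>x$ for $x>0$ by (\ref{W}), one gets $\mu\sqrt{\alpha_2}=\sqrt{\alpha_1}\,W(\xi)>\xi\sqrt{\alpha_1}$, so $0<s(t)<r(t)$ for every $t>0$; in particular $\erf(\xi),\erfc(\mu)\in(0,1)$, so every denominator occurring in (\ref{SolConv}) and in (\ref{F})--(\ref{G}) is finite and nonzero.

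Next I would check the two conditions that encode the definitions of the constants. From (\ref{SolConv}) one has $\theta_{1_x}(0,t)=B_1/\sqrt{\pi\alpha_1 t}$ (using that the derivative of $\erf$ at $0$ is $2/\sqrt\pi$) and $\theta_1(0,t)=A_1$; substituting these into (\ref{CondConv}) and cancelling the common factor $1/\sqrt t$ turns that condition into $k_1 B_1/\sqrt{\pi\alpha_1}=h_0(A_1+D_\infty)$, which is exactly the identity satisfied by the chosen $A_1$ and $B_1$, so (\ref{CondConv}) holds. Similarly, $\theta_{1_x}(s(t),t)=B_1\exp(-\xi^2)/\sqrt{\pi\alpha_1 t}$ and $r(t)-s(t)=2\sqrt t\,(\mu\sqrt{\alpha_2}-\xi\sqrt{\alpha_1})$, so the product $\theta_{1_x}(s(t),t)\,(r(t)-s(t))$ is independent of $t$ and equals $\frac{2B_1\exp(-\xi^2)}{\sqrt{\pi\alpha_1}}(\mu\sqrt{\alpha_2}-\xi\sqrt{\alpha_1})$; inserting the value of $B_1$ and the relation (\ref{mu}) makes this product collapse to $\gamma$, so (\ref{MushyAncho}) holds precisely because $\mu$ was defined by (\ref{mu}).

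What is left is condition (\ref{MushyCalorLat}), and this is where (\ref{EqXi}) appears and where the only real work lies. Using $\theta_{1_x}(s(t),t)=B_1\exp(-\xi^2)/\sqrt{\pi\alpha_1 t}$, $\theta_{2_x}(r(t),t)=B_2\exp(-\mu^2)/\sqrt{\pi\alpha_2 t}$, $\dot s(t)=\xi\sqrt{\alpha_1}/\sqrt t$ and $\dot r(t)=\mu\sqrt{\alpha_2}/\sqrt t$, the factor $1/\sqrt t$ drops out of (\ref{MushyCalorLat}). I would then use $\alpha_i=k_i/(\rho c_i)$ (so that $k_i/\sqrt{\alpha_i}=\sqrt{\rho\,k_i c_i}$), divide through by $\sqrt{\rho k_1 c_1}/\sqrt\pi$ and by $D_\infty$, substitute the closed forms of $B_1$ and $B_2$, and finally eliminate $\mu$ via (\ref{mu}), replacing $\exp(-\mu^2)$ by $\exp(-\alpha_{12}W^2(\xi))$, $\erfc(\mu)$ by $\erfc(\sqrt{\alpha_{12}}\,W(\xi))$ and $\mu\sqrt{\alpha_2}$ by $\sqrt{\alpha_1}\,W(\xi)$. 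Collecting terms, the left side becomes exactly $F(\xi)$ and the right side $\frac{l\sqrt\pi}{D_\infty c_1}G(\xi)$, so (\ref{MushyCalorLat}) is equivalent to (\ref{EqXi}). Since everything else in (\ref{PbCondConv}) holds automatically for the functions (\ref{SolConv})--(\ref{mu}), this shows that they solve (\ref{PbCondConv}) if and only if $\xi$ solves (\ref{EqXi}). The obstacle is not of a conceptual kind: it is simply keeping the algebra in (\ref{MushyCalorLat}) under control, which requires careful bookkeeping of the thermal parameters and the various square roots and a consistent use of (\ref{mu}) to remove every occurrence of $\mu$.
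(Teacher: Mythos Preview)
Your proposal is correct and follows essentially the same approach as the paper: the paper's proof is the Neumann similarity derivation carried out in the paragraphs preceding the theorem, which determines $A_1,B_1,A_2,B_2$ from (\ref{MushyTemp}), (\ref{CondInicEInf}), (\ref{CondConv}), then fixes $\mu$ via (\ref{MushyAncho}) to obtain (\ref{mu}), and finally reduces (\ref{MushyCalorLat}) to (\ref{EqXi}); you simply recast this forward derivation as a verification and add the observation $s(t)<r(t)$, but the method and the reduction to the single scalar equation (\ref{EqXi}) are identical.
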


Therefore, finding a similarity solution to problem (\ref{PbCondConv}) reduces to studying equation (\ref{EqXi}). We begin this by introducing some functions related to equation (\ref{EqXi}) and some properties of them. Let be $F_1$, $F_2$ the functions defined by:
\begin{align}
\label{F1}&F_1(x)=\frac{\exp(-x^2)}{\erf(x)+\frac{k_1}{h_0\sqrt{\alpha_1\pi}}},&x>0\\
\label{F2}&F_2(x)=\frac{\exp(-x^2)}{\erfc(x)},&x>0
\end{align}
Then, (\ref{F}) can be rewritten as:
\begin{equation}\label{FBis}
F(x)=F_1(x)-\frac{\theta_0\sqrt{k_2c_2}}{D_\infty\sqrt{k_1c_1}}F_2\left(\sqrt{\alpha_{12}}W(x)\right),\quad x>0.
\end{equation}

\begin{lemma}\label{LeFGProp}
$$\vspace*{-1cm}$$
\noindent 1. The functions $W$, $F_1$, $F_2$ defined by (\ref{W}), (\ref{F1}), (\ref{F2}), respectively, verify:
\begin{subequations}
\begin{align}
\label{WProp}&W(0^+)=\frac{\gamma k_1}{2D_\infty h_0\sqrt{\alpha_1}}, &
W&(+\infty)=+\infty, &
W'(x)>0\quad\forall\,x>0\\
\label{F1Prop}&F_1(0^+)=\frac{h_0\sqrt{\alpha_1\pi}}{k_1}>0, &
F_1&(+\infty)=0, &
F_1'(x)<0\quad\forall\,x>0\\
\label{F2Prop}&F_2(0^+)=1, &
F_2&(+\infty)=+\infty, &
F'_2(x)>0\quad\forall\,x>0
\end{align}
\end{subequations}

\noindent 2. The function $F$ defined by (\ref{F}) verifies:
\begin{equation}\label{FProp}
{\small
\hspace*{-0.25cm}F(0^+)=\frac{h_0\sqrt{\alpha_1\pi}}{k_1}-\frac{\theta_0\sqrt{k_2c_2}}{D_\infty\sqrt{k_1c_1}}F_2\left(\frac{\gamma k_1}{2D_\infty h_0\sqrt{\alpha_2}}\right),
\hspace*{0.25cm}
F(+\infty)=-\infty,
\hspace*{0.25cm}
F'(x)<0\quad\forall\,x>0
}
\end{equation}
\noindent 3. The function $G$ defined by (\ref{G}) verifies:
\begin{equation}\label{GProp}
G(0^+)=\frac{(1-\epsilon)\gamma k_1}{2D_\infty h_0\sqrt{\alpha_1}},
\hspace*{0.5cm}
G(+\infty)=+\infty, 
\hspace*{0.5cm}
G'(x)>0\quad\forall\,x>0.
\end{equation}
\end{lemma}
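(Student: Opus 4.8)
The plan is to reduce every assertion to elementary calculus, isolating the single non-routine ingredient, which is a sharp inequality for $\erfc$. I will freely use $\erf(0)=0$, $\erf(+\infty)=1$, $\erfc(0)=1$, $\erfc(+\infty)=0$ and $\erf'(x)=\tfrac{2}{\sqrt{\pi}}\exp(-x^2)=-\erfc'(x)$, together with the standing positivity of the physical parameters. The functions $W$ and $G$ are treated simultaneously, since $G$ is $W$ with $\gamma$ replaced by $(1-\epsilon)\gamma>0$: the values at $0^+$ follow by substituting $\erf(0)=0$ and $\exp(0)=1$ in (\ref{W}) and (\ref{G}); the divergence at $+\infty$ follows because $\exp(x^2)\bigl(\erf(x)+\tfrac{k_1}{h_0\sqrt{\alpha_1\pi}}\bigr)\to+\infty$ while the linear term stays nonnegative; and differentiating (\ref{W}) gives
\[
W'(x)=1+\frac{\gamma\sqrt{\pi}}{2D_\infty}\left[2x\exp(x^2)\left(\erf(x)+\tfrac{k_1}{h_0\sqrt{\alpha_1\pi}}\right)+\frac{2}{\sqrt{\pi}}\right]>1>0 ,
\]
the identical computation for (\ref{G}) yielding $G'(x)>1>0$. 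For $F_1$ the boundary values $F_1(0^+)=\tfrac{h_0\sqrt{\alpha_1\pi}}{k_1}$ and $F_1(+\infty)=0$ are immediate from (\ref{F1}); writing $g(x)=\erf(x)+\tfrac{k_1}{h_0\sqrt{\alpha_1\pi}}>0$, the quotient rule gives
\[
F_1'(x)=-\frac{\exp(-x^2)\bigl(2x\,g(x)+g'(x)\bigr)}{g(x)^2}<0 ,
\]
since $g'(x)=\erf'(x)>0$.

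The crux is $F_2$. Here $F_2(0^+)=1$ is immediate, and $F_2(+\infty)=+\infty$ follows from L'H\^opital's rule applied to $\exp(-x^2)/\erfc(x)$, whose ratio of derivatives is $x\sqrt{\pi}\to+\infty$. For the monotonicity, the quotient rule shows that $F_2'(x)>0$ is equivalent to
\[
x\,\erfc(x)<\frac{1}{\sqrt{\pi}}\exp(-x^2),\qquad x>0 ,
\]
which I would prove by setting $\phi(x)=\tfrac{1}{\sqrt{\pi}}\exp(-x^2)-x\,\erfc(x)$: one has $\phi(0)=\tfrac{1}{\sqrt{\pi}}>0$, a one-line computation gives $\phi'(x)=-\erfc(x)<0$, and $\phi(+\infty)=0$ (again by the L'H\^opital estimate, since then $x\,\erfc(x)\sim\tfrac{1}{\sqrt{\pi}}\exp(-x^2)$), so $\phi>0$ on $(0,+\infty)$, which is precisely the required bound. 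I expect this inequality to be the main obstacle; everything else is substitution, limit evaluation, and sign bookkeeping.

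Finally, $F$ is assembled from the previous functions through (\ref{FBis}), namely $F=F_1-\tfrac{\theta_0\sqrt{k_2c_2}}{D_\infty\sqrt{k_1c_1}}\,F_2\bigl(\sqrt{\alpha_{12}}\,W(\cdot)\bigr)$ with a positive coefficient. For $F(0^+)$ I would use $\sqrt{\alpha_{12}}\,W(0^+)=\sqrt{\alpha_1/\alpha_2}\cdot\tfrac{\gamma k_1}{2D_\infty h_0\sqrt{\alpha_1}}=\tfrac{\gamma k_1}{2D_\infty h_0\sqrt{\alpha_2}}$, which gives the stated expression; since $F_1(+\infty)=0$, $W(+\infty)=+\infty$ and $F_2(+\infty)=+\infty$, the second term forces $F(+\infty)=-\infty$. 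Differentiating by the chain rule,
\[
F'(x)=F_1'(x)-\frac{\theta_0\sqrt{k_2c_2}}{D_\infty\sqrt{k_1c_1}}\,\sqrt{\alpha_{12}}\;W'(x)\,F_2'\bigl(\sqrt{\alpha_{12}}\,W(x)\bigr),
\]
and by the signs established above, $F_1'<0$, $W'>0$ and $F_2'>0$, so every contribution is strictly negative and $F'(x)<0$ for all $x>0$. This completes the list of claims.
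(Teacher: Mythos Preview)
Your proof is correct and follows exactly the approach the paper intends: the paper's own proof consists of the single line ``It follows from elementary computations,'' and you have carried out precisely those computations, including the one genuinely nontrivial step (the inequality $x\,\erfc(x)<\tfrac{1}{\sqrt{\pi}}e^{-x^2}$ needed for $F_2'>0$), which you handle cleanly via the auxiliary function $\phi$.
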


\begin{proof}
It follows from elementary computations.
\end{proof}  

Then, we have:

\begin{theorem}\label{ThEyUEqXi}
Equation (\ref{EqXi}) has an only one positive solution if and only if the coefficient $h_0$ verifies the following inequality:
\begin{equation}\label{IneqH0}
h_0>h_0^\star,
\end{equation}
where $h_0^\star$ is defined by:
\begin{equation}\label{h0Star}
h_0^\star=\frac{\gamma k_1}{2D_\infty\eta\sqrt{\alpha_2}},
\end{equation}
with $\eta=\eta\left(\frac{\gamma k_1}{\theta_0 k_2},\frac{(1-\epsilon)l}{\theta_0 c_2}\right)$ the only one solution to the equation:
\begin{equation}\label{EqEta0}
F_3(x)=0,\quad x>0,
\end{equation}
and $F_3$ the function defined by:
\begin{equation}\label{F3}
F_3(x)=F_2(x)-\frac{\gamma k_1\sqrt{\pi}}{2\theta_0k_2}\frac{1}{x}+\frac{(1-\epsilon)l\sqrt{\pi}}{\theta_0c_2}x,\quad x>0.
\end{equation}
\end{theorem}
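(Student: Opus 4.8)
The plan is to turn equation (\ref{EqXi}) into a single strictly monotone scalar equation, read off the condition under which it has a root, and then rewrite that condition as $h_0>h_0^\star$. First I would set
\[
H(x)=F(x)-\frac{l\sqrt{\pi}}{D_\infty c_1}\,G(x),\qquad x>0,
\]
so that (\ref{EqXi}) is $H(x)=0$. By Lemma \ref{LeFGProp}, $F$ is continuous and strictly decreasing on $(0,+\infty)$ with $F(+\infty)=-\infty$, and $G$ is continuous and strictly increasing there with $G(+\infty)=+\infty$; hence $H$ is continuous and \emph{strictly} decreasing on $(0,+\infty)$, with $H(+\infty)=-\infty$ and $H(0^+)$ finite (both $F(0^+)$ and $G(0^+)$ are finite by (\ref{FProp}), (\ref{GProp})). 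A routine intermediate-value plus strict-monotonicity argument then yields: equation (\ref{EqXi}) has a positive solution if and only if $H(0^+)>0$, and in that case the solution is unique; if $H(0^+)\le 0$ then $H<0$ on $(0,+\infty)$ and there is no solution. Thus the uniqueness in the statement is automatic once existence is settled, and everything reduces to analysing the sign of $H(0^+)$.

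Second I would compute $H(0^+)$ from (\ref{FProp}) and (\ref{GProp}) and perform the change of variable
\[
y=\frac{\gamma k_1}{2D_\infty h_0\sqrt{\alpha_2}},
\]
which is precisely the argument of $F_2$ occurring in $F(0^+)$ and which is a strictly decreasing bijection of $h_0\in(0,+\infty)$ onto $y\in(0,+\infty)$. Substituting $h_0=\gamma k_1/(2D_\infty\sqrt{\alpha_2}\,y)$ and using the algebraic identities $\sqrt{\alpha_1}/\sqrt{\alpha_2}=\sqrt{\alpha_{12}}$, $\sqrt{k_2c_2/(k_1c_1)}=k_2\sqrt{\alpha_{12}}/k_1$ and $1/(c_1\sqrt{\alpha_{12}})=k_2\sqrt{\alpha_{12}}/(k_1c_2)$ (all consequences of $\alpha_i=k_i/(\rho c_i)$ with a common $\rho$), the inequality $H(0^+)>0$, after dividing by the positive factor $\theta_0 k_2\sqrt{\alpha_{12}}/(D_\infty k_1)$, collapses to
\[
\frac{\gamma k_1\sqrt{\pi}}{2\theta_0 k_2}\cdot\frac{1}{y}-F_2(y)-\frac{(1-\epsilon)l\sqrt{\pi}}{\theta_0 c_2}\,y>0,
\]
that is, to $F_3(y)<0$ with $F_3$ as in (\ref{F3}). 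This step — carrying out the substitution and keeping careful track of the error-function-free combinations of $k_i$, $c_i$, $\alpha_i$ so that the constants match (\ref{F3}) exactly — is the only real computational work; the rest is soft monotonicity.

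Third I would study $F_3$. Since $F_2(0^+)=1$ is finite, the term $-\frac{\gamma k_1\sqrt{\pi}}{2\theta_0 k_2}\frac1x$ forces $F_3(0^+)=-\infty$; since $F_2(+\infty)=+\infty$, we get $F_3(+\infty)=+\infty$; and
\[
F_3'(x)=F_2'(x)+\frac{\gamma k_1\sqrt{\pi}}{2\theta_0 k_2}\cdot\frac{1}{x^2}+\frac{(1-\epsilon)l\sqrt{\pi}}{\theta_0 c_2}>0\qquad\forall\,x>0,
\]
because $F_2'>0$ by Lemma \ref{LeFGProp} and the remaining terms are positive. Hence $F_3$ is a continuous strictly increasing bijection of $(0,+\infty)$ onto $(-\infty,+\infty)$ and has a unique zero $\eta>0$; moreover $F_3$ depends on the data only through $\frac{\gamma k_1}{\theta_0 k_2}$ and $\frac{(1-\epsilon)l}{\theta_0 c_2}$ (as $F_2$ is a fixed function), so $\eta=\eta\!\left(\frac{\gamma k_1}{\theta_0 k_2},\frac{(1-\epsilon)l}{\theta_0 c_2}\right)$ as claimed, and by strict monotonicity $F_3(y)<0\iff y<\eta$. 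Translating back through the definition of $y$,
\[
y<\eta\iff\frac{\gamma k_1}{2D_\infty h_0\sqrt{\alpha_2}}<\eta\iff h_0>\frac{\gamma k_1}{2D_\infty\eta\sqrt{\alpha_2}}=h_0^\star,
\]
which is (\ref{IneqH0})–(\ref{h0Star}). Chaining the equivalences ``(\ref{EqXi}) has a unique positive solution'' $\iff H(0^+)>0\iff F_3(y)<0\iff h_0>h_0^\star$ finishes the proof. I expect no serious obstacle beyond the bookkeeping in the second step.
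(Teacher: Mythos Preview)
Your proof is correct and follows essentially the same route as the paper: reduce existence and uniqueness to the sign of $F(0^+)-\frac{l\sqrt{\pi}}{D_\infty c_1}G(0^+)$ via the monotonicity in Lemma~\ref{LeFGProp}, rewrite this as $F_3(y)<0$ with $y=\frac{\gamma k_1}{2D_\infty h_0\sqrt{\alpha_2}}$, and then use the strict monotonicity of $F_3$ to turn it into $h_0>h_0^\star$. The only cosmetic differences are that the paper does not name $H$ explicitly and establishes the monotonicity of $F_3$ by writing $F_3=F_2-F_4$ rather than differentiating directly.
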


\begin{proof}
It follows from the properties of the functions $F$, $G$ given in Lemma \ref{LeFGProp} that equation (\ref{EqXi}) admits an only one positive solution if and only if:
\begin{equation}\label{F0+G0+}
F(0^+)>\frac{l\sqrt{\pi}}{D_\infty c_1}G(0^+).
\end{equation}
Let us observe that, by using the function $F_3$ given by (\ref{F3}), (\ref{F0+G0+}) can be rewritten as:
\begin{equation}\label{F0+G0+Bis}
F_3\left(\frac{\gamma k_1}{2D_\infty h_0\sqrt{\alpha_2}}\right)<0.
\end{equation}
Let be $F_4$ the function defined by:
\begin{equation*}
F_4(x)=\frac{\gamma k_1\sqrt{\pi}}{2\theta_0k_2}\frac{1}{x}-\frac{(1-\epsilon)l\sqrt{\pi}}{\theta_0c_2}x,\quad x>0.
\end{equation*}
Since:
\begin{equation*}
F_3(x)=F_2(x)-F_4(x),\quad x>0,
\end{equation*}
it follows from the properties of the function $F_2$ given in Lemma \ref{LeFGProp} and the fact that $F_4$ verifies:
\begin{equation*}
F_4(0^+)=-\infty,
\hspace{1cm}
F_4(+\infty)=+\infty,
\hspace{1cm}
F_4'(x)<0\quad x>0,
\end{equation*}
that $F_3$ is such that:
\begin{equation*}
F_3(0^+)=-\infty,
\hspace{1cm}
F_3(+\infty)=+\infty,
\hspace{1cm}
F_3'(x)>0\quad x>0.
\end{equation*}
Therefore, (\ref{F0+G0+Bis}) holds if and only if:
\begin{equation}\label{F0+G0+BisBis}
0<\frac{\gamma k_1}{2D_\infty h_0\sqrt{\alpha_2}}<\eta,
\end{equation}
where $\eta=\eta\left(\frac{\gamma k_1}{\theta_0 k_2},\frac{(1-\epsilon)l}{\theta_0 c_2}\right)$ is the only one positive solution to equation (\ref{EqEta0}). Only remains to observe that inequality (\ref{F0+G0+BisBis}) is equivalent to (\ref{IneqH0}).
\end{proof}

From Theorems \ref{ThCharact} and \ref{ThEyUEqXi}, we can establish now the main result of this section:

\begin{corollary}\label{ThEyU}
The Stefan problem (\ref{PbCondConv}) has the similarity solution given by (\ref{SolConv}) if and only if the coefficient $h_0$ that characterizes the heat transfer coefficient at the boundary $x=0$ is large enough so much as to verifies inequality (\ref{IneqH0}). 
\end{corollary}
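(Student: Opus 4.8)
The plan is to read the corollary off as the composition of the two preceding results, which already phrase the matter as a chain of equivalences. First I would recall Theorem~\ref{ThCharact}: problem (\ref{PbCondConv}) admits a similarity solution of the precise form (\ref{SolConv}), with $\mu$ obtained from $\xi$ via (\ref{mu}), \emph{if and only if} $\xi$ is a positive root of the transcendental equation (\ref{EqXi}). Then I would invoke Theorem~\ref{ThEyUEqXi}: equation (\ref{EqXi}) has a (unique) positive solution \emph{if and only if} $h_0>h_0^\star$. Concatenating the two biconditionals gives exactly the claim: (\ref{PbCondConv}) has the solution (\ref{SolConv}) $\iff$ (\ref{EqXi}) is solvable on $(0,+\infty)$ $\iff$ $h_0>h_0^\star$; moreover, since the root $\xi$ is unique, so is the similarity solution of the form (\ref{SolConv}) when it exists.

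Before closing, I would spell out the small admissibility checks implicit in calling (\ref{SolConv}) a genuine solution of the Stefan problem, i.e. that the free boundaries satisfy $0<s(t)<r(t)$ for all $t>0$ and that $\theta_1\le 0\le\theta_2$ with the expected monotonicity. For the geometry, any positive $\xi$ satisfies $W(\xi)>\xi$ because, by (\ref{W}), $W(x)-x=\frac{\gamma\sqrt{\pi}}{2D_\infty}\exp(x^2)\bigl(\erf(x)+\frac{k_1}{h_0\sqrt{\alpha_1\pi}}\bigr)>0$ for $x>0$; hence (\ref{mu}) yields $\mu\sqrt{\alpha_2}=\sqrt{\alpha_1}\,W(\xi)>\sqrt{\alpha_1}\,\xi$, so $r(t)=2\mu\sqrt{\alpha_2 t}>2\xi\sqrt{\alpha_1 t}=s(t)>0$ and the mushy region $(s(t),r(t))$ is nonempty. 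For the temperatures one simply reads from (\ref{SolConv}) that on $0<x<s(t)$ one has $\frac{x}{2\sqrt{\alpha_1 t}}<\xi$, which forces $\theta_1(x,t)<0$, while on $x>r(t)$ one has $\frac{x}{2\sqrt{\alpha_2 t}}>\mu$, which forces $\theta_2(x,t)>0$; both profiles are monotone in $x$, consistent with a solidification process driven by the cold bulk temperature $-D_\infty$.

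I do not expect any genuine obstacle here: the corollary is immediate once Theorems~\ref{ThCharact} and~\ref{ThEyUEqXi} are available. The only point deserving a word of care is that Theorem~\ref{ThCharact} must be used in the direction that \emph{reconstructs} the solution from a given root $\xi$ (together with the $\mu$ produced by (\ref{mu})), and that this reconstructed pair indeed satisfies every one of (\ref{EqTheta1Conv})--(\ref{CondConv}); but this is precisely the content of Theorem~\ref{ThCharact}, so in the write-up the proof reduces to the sentence ``it is an immediate consequence of Theorems~\ref{ThCharact} and~\ref{ThEyUEqXi},'' augmented with the verifications above for completeness.
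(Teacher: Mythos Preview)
Your proposal is correct and matches the paper's approach exactly: the corollary is presented there as an immediate consequence of Theorems~\ref{ThCharact} and~\ref{ThEyUEqXi}, with no further argument. Your additional admissibility checks (nonemptiness of the mushy region and the signs of $\theta_1,\theta_2$) are not in the paper but are correct and harmless elaborations.
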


\begin{remark}
In \cite{Ta2015-b} it was obtained an explicit similarity solution for a one-phase solidification process with a mushy zone according to the model of Solomon, Wilson and Alexiades \cite{SoWiAl1982}. We note that Theorem \ref{ThEyU} reduces to Theorem 1 in \cite{Ta2015-b}, in which the explicit solution is established, when it is considered an initial temperature for the liquid phase equal to the phase-change temperature. That is, when $\theta_0=0$ we have that the solution presented in this article coincides with the solution given in \cite{Ta2015-b} and that the hypothesis on the heat transfer coefficient under which we have the solution is equivalent to the one given in \cite{Ta2015-b}.

In \cite{Ta2015-a} it was obtained an explicit similarity solution for a two-phase solidification process without any mushy region. We also have that Theorem \ref{ThEyU} reduces to Theorem 2 in \cite{Ta2015-a}, in which the explicit solution is obtained, if we think of a mushy region of zero thickness. In other words, when $\gamma=0$ we have that the solution obtained here coincides with the solution given in \cite{Ta2015-a} and that the condition for the heat transfer coefficient is equivalent to the one given there. 
\end{remark}

\section{Relation between the problems with convective and temperature boundary conditions}\label{RelacTempConv}
As we have mentioned before, convective boundary conditions are physically more appropriate to represent a temperature imposed at the boundary of a material (actually, in the vicinity of) than conditions of Dirichlet type \cite{CaJa1959}. Nevertheless, Dirichlet conditions are frequently encountered in the literature modelling this sort of situations. Thus we are interested in analysing the relationship between the problems with the two types of conditions. In other words, in how problems (\ref{PbCondConv}) and (\ref{PbCondConv}$^\star$) are related. 

Let us start by considering problem (\ref{PbCondConv}) with $h_0$ satisfying condition (\ref{IneqH0}). We know from Corollary \ref{ThEyU} that it has the similarity solution given by (\ref{SolConv}), where $\xi$ is the only one positive solution to equation (\ref{EqXi}). Since:
\begin{equation*}
\theta_1(0,t)=-\frac{D_\infty\erf(\xi)}{\erf(\xi)+\frac{k_1}{h_0\sqrt{\pi\alpha_1}}}\text{,}
\end{equation*}
we will consider problem (\ref{PbCondConv}$^\star$) with $D_0$ defined as:
\begin{equation}\label{D0}
D_0=\frac{D_\infty\erf(\xi)}{\erf(\xi)+\frac{k_1}{h_0\sqrt{\pi\alpha_1}}}>0\text{.}
\end{equation}
We know from \cite{Ta1990} that this problem has the similarity solution given by:
\begin{subequations}\label{SolTemp}
\begin{align}
&\theta^\star_1(x,t)=-D_0\left(1-\frac{\erf\left(\frac{x}{2\sqrt{\alpha_1 t}}\right)}{\erf(\xi^\star)}\right)
&0<x<s^\star(t),&\quad t>0\\
&\theta^\star_2(x,t)=\frac{\theta_0\erf(\mu^\star)}{\erfc(\mu^\star)}\left(\frac{\erf\left(\frac{x}{2\sqrt{\alpha_2t}}\right)}{\erf(\mu^\star)}-1\right)
&x>r^\star(t),&\quad t>0\\
&s^\star(t)=2\xi^\star\sqrt{\alpha_1 t} &&\quad t>0\\
&r^\star(t)=2\mu^*\sqrt{\alpha_2t} &&\quad t>0
\end{align}
\end{subequations}
where $\mu^*$ is given by:
\begin{equation}\label{muStar}
\mu^*=\sqrt{\alpha_{12}}W_0(\xi^\star),
\end{equation}
$\xi^\star$ is the only one solution to the equation:
\begin{equation}\label{EqXiStar}
F_0(x)=\frac{l\sqrt{\pi}}{D_0 c_1}G_0(x)\text{,}\quad x>0
\end{equation}
and $W_0$, $F_0$, $G_0$ are the functions defined by:
\begin{subequations}\label{W0F0G0}
\begin{align}
\label{W0}&W_0(x)=x+\frac{\gamma\sqrt{\pi}}{2D_0}\exp(x^2)\erf(x)&x>0\\
\label{F0}&F_0(x)=\frac{\exp(-x^2)}{\erf(x)}-\frac{\theta_0\sqrt{k_2c_2}}{D_0\sqrt{k_1c_1}}\frac{\exp\left(-\alpha_{12}{W_0}^2(x)\right)}{\erfc\left(\sqrt{\alpha_{12}}W_0(x)\right)}&x>0\\
\label{G0}&G_0(x)=x+\frac{(1-\epsilon)\gamma\sqrt{\pi}}{2D_0}\exp(x^2)\erf(x)&x>0.
\end{align}
\end{subequations}
Exploiting the fact that $\xi$ satisfies (\ref{EqXi}), it follows that it is also a solution to equation (\ref{EqXiStar}). In fact, when $D_0$ is given by (\ref{D0}), we have that: 
\begin{equation*}
\begin{split}
F_0(\xi)&=
\frac{\exp(-\xi^2)}{\erf(\xi)}-
\frac{\theta_0\sqrt{k_2c_2}}{D_\infty\sqrt{k_1c_1}}\frac{\erf(\xi)+\frac{k_1}{h_0\sqrt{\pi\alpha_1}}}{\erf(\xi)}
F_2\left(\sqrt{\alpha_{12}}\left(\xi+\frac{\gamma\sqrt{\pi}}{2D_\infty}\exp(\xi^2)\left(\erf(\xi)+\frac{k_1}{h_0\sqrt{\pi\alpha_1}}\right)\right)\right)\\
&=\frac{\erf(\xi)+\frac{k_1}{h_0\sqrt{\pi\alpha_1}}}{\erf(\xi)}\left[F_1(\xi)-
\frac{\theta_0\sqrt{k_2c_2}}{D_\infty\sqrt{k_1c_1}}
F_2\left(\sqrt{\alpha_{12}}W(\xi)\right)\right]\\
&=\frac{\erf(\xi)+\frac{k_1}{h_0\sqrt{\pi\alpha_1}}}{\erf(\xi)}F(\xi)=
\frac{\erf(\xi)+\frac{k_1}{h_0\sqrt{\pi\alpha_1}}}{\erf(\xi)}\left[\frac{l\sqrt{\pi}}{D_\infty c_1}G(\xi)\right]
\\
&=\frac{\erf(\xi)+\frac{k_1}{h_0\sqrt{\pi\alpha_1}}}{\erf(\xi)}\left[\frac{l\sqrt{\pi}}{D_0 c_1}\frac{\erf(\xi)}{\erf(\xi)+\frac{k_1}{h_0\sqrt{\pi\alpha_1}}}\left(\xi+\frac{(1-\epsilon)\gamma\sqrt{\pi}}{2D_0}\exp(\xi^2)\erf(\xi)\right)\right]\\
&=\frac{l\sqrt{\pi}}{D_0 c_1}G_0(\xi)
\text{.}
\end{split}
\end{equation*}
Therefore, $\xi=\xi^\star$. From this, it is easy to see that $\mu=\mu^\star$, $\theta_1=\theta_1^\star$ and $\theta_2=\theta_2^\star$. 

Then, we have the following theorem:

\begin{theorem}\label{ThConvToTemp}
If $h_0$ satisfies condition (\ref{IneqH0}) then the similarity solution (\ref{SolConv}) to problem (\ref{PbCondConv}) coincides with the similarity solution (\ref{SolTemp}) to problem (\ref{PbCondConv}$^\star$) when $D_0$ is given by (\ref{D0}).
\end{theorem}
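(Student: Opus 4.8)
The plan is to show that the single transcendental equation (\ref{EqXi}) governing problem (\ref{PbCondConv}) and the equation (\ref{EqXiStar}) governing problem (\ref{PbCondConv}$^\star$) share the root $\xi$, and then to push this identity through every ingredient of the two solution formulas. First I would fix $h_0$ satisfying (\ref{IneqH0}) and invoke Corollary~\ref{ThEyU}: problem (\ref{PbCondConv}) has the similarity solution (\ref{SolConv}) with $\xi$ the unique positive root of (\ref{EqXi}). Evaluating the first line of (\ref{SolConv}) at $x=0$ gives $\theta_1(0,t)=-D_0$ with $D_0$ as in (\ref{D0}), so $D_0>0$; hence, by the results of \cite{Ta1990} recalled above, problem (\ref{PbCondConv}$^\star$) with this $D_0$ has the similarity solution (\ref{SolTemp}), where $\xi^\star$ is the unique positive root of (\ref{EqXiStar}) and $\mu^\star=\sqrt{\alpha_{12}}W_0(\xi^\star)$.

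The key step is the identity already displayed before the statement. With $D_0$ given by (\ref{D0}) one has $\tfrac{1}{D_0}=\tfrac{1}{D_\infty}\,\tfrac{\erf(\xi)+k_1/(h_0\sqrt{\pi\alpha_1})}{\erf(\xi)}$, and this single relation forces $W_0(\xi)=W(\xi)$ and $G_0(\xi)=G(\xi)$: the definitions (\ref{W0}), (\ref{G0}) differ from (\ref{W}), (\ref{G}) only through the replacement of $D_\infty^{-1}$ by $D_0^{-1}\erf(\xi)$, which is exactly the factor produced by the relation for $1/D_0$. Substituting this into (\ref{F0}) one factors out $\tfrac{\erf(\xi)+k_1/(h_0\sqrt{\pi\alpha_1})}{\erf(\xi)}$ to obtain $F_0(\xi)=\tfrac{\erf(\xi)+k_1/(h_0\sqrt{\pi\alpha_1})}{\erf(\xi)}\,F(\xi)$, where $F$ is as in (\ref{FBis}); then one uses that $\xi$ solves (\ref{EqXi}) and, once more, the relation for $1/D_0$, to arrive at $F_0(\xi)=\tfrac{l\sqrt{\pi}}{D_0c_1}G_0(\xi)$. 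Thus $\xi$ solves (\ref{EqXiStar}); since that equation has a unique positive root, $\xi=\xi^\star$.

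It then remains to propagate the equality. From $\xi=\xi^\star$ and $W_0(\xi)=W(\xi)$ one gets $\mu=\sqrt{\alpha_{12}}W(\xi)=\sqrt{\alpha_{12}}W_0(\xi^\star)=\mu^\star$, hence $s\equiv s^\star$ and $r\equiv r^\star$. For the temperatures, the first line of (\ref{SolConv}) is literally $-D_0\bigl(1-\erf(x/(2\sqrt{\alpha_1t}))/\erf(\xi)\bigr)$, which coincides with $\theta_1^\star$ because $\xi=\xi^\star$; and $\theta_2$ in (\ref{SolConv}) depends on the data only through $\mu$, which equals $\mu^\star$, so $\theta_2\equiv\theta_2^\star$. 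I do not expect a genuine analytic obstacle: the whole argument is the bookkeeping identity of the preceding display together with the two uniqueness statements (Theorem~\ref{ThEyUEqXi} for (\ref{EqXi}) and the analogous fact from \cite{Ta1990} for (\ref{EqXiStar})). The only point requiring care is that $W_0$ and $W$ (resp. $G_0$ and $G$) need not agree as functions of $x$ — only at the specific value $x=\xi$, where the defining relation (\ref{D0}) for $D_0$ is precisely what makes them match.
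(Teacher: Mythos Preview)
Your proposal is correct and follows essentially the same approach as the paper: both arguments evaluate $\theta_1(0,t)$ to identify $D_0$, then verify that the root $\xi$ of (\ref{EqXi}) also solves (\ref{EqXiStar}) via the factorization $F_0(\xi)=\dfrac{\erf(\xi)+k_1/(h_0\sqrt{\pi\alpha_1})}{\erf(\xi)}\,F(\xi)$ together with $W_0(\xi)=W(\xi)$ and $G_0(\xi)=G(\xi)$, and finally invoke uniqueness of $\xi^\star$ to conclude $\xi=\xi^\star$, $\mu=\mu^\star$, and hence equality of the full solutions. Your explicit remark that $W_0$ and $W$ agree only at $x=\xi$ (not as functions) is a welcome clarification of exactly the point on which the computation hinges.
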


Let us consider now the problem (\ref{PbCondConv}$^\star$). It follows from \cite{Ta1990} that it has the similarity solution given by (\ref{SolTemp}), where $\xi^\star$ is the only one positive solution to equation (\ref{EqXiStar}). Let $D_\infty>D_0$ and let $h_0>0$. Since:
\begin{equation*}
k_1\theta_1^\star(0,t)=\frac{h_0}{\sqrt{t}}(\theta_1^\star(0,t)+D_\infty)
\end{equation*}
if and only if:
\begin{equation}\label{h0}
h_0=\frac{k_1D_0}{\sqrt{\pi\alpha_1}(D_\infty-D_0)\erf(\xi^\star)}>0\text{,}
\end{equation}
we will consider problem (\ref{PbCondConv}) with $D_\infty>D_0$ and $h_0$ given by (\ref{h0}). As before, by taking into account that $\xi^\star$ satisfies equation (\ref{EqXiStar}), it can be shown that $\xi^\star$ is a solution to equation (\ref{EqXi}). Then, we have from Theorem \ref{ThCharact} that problem (\ref{PbCondConv}) admits the similarity solution given by (\ref{SolConv}) with $\xi=\xi^\star$. Moreover, Corollary \ref{ThCharact} implies that $h_0$ satisfies (\ref{IneqH0}), which in this case can be written as:
\begin{equation}\label{IneqXiStarPreliminar}
\erf(\xi^\star)<\frac{2D_\infty D_0\eta}{\gamma(D_\infty-D_0)\sqrt{\pi\alpha_{12}}}\text{.}
\end{equation}
Then, we have the following theorem:
\begin{theorem}\label{ThTempToConv}
The similarity solution (\ref{SolTemp}) to problem (\ref{PbCondConv}$^\star$) coincides with the similarity solution (\ref{SolConv}) to problem (\ref{PbCondConv}) when $D_\infty>D_0$ and $h_0$ is given by (\ref{h0}). Moreover, the parameter $\xi^\star$ that characterizes the free boundary separating the solid phase and the mushy region verifies the following inequality:
\begin{equation}
\erf(\xi^\star)<\min\left\{1,\frac{2D_\infty D_0\eta}{\gamma(D_\infty-D_0)\sqrt{\pi\alpha_{12}}}\right\}\text{,}
\end{equation}
where $\eta$ is the only one solution to equation (\ref{EqEta0}). 
\end{theorem}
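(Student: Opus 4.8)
The plan is to run the argument that precedes Theorem~\ref{ThConvToTemp} in reverse: starting from the explicit solution~(\ref{SolTemp}) of the temperature problem~(\ref{PbCondConv}$^\star$), produce a matching solution of the convective problem~(\ref{PbCondConv}), and then read off the constraint on $\xi^\star$ from the existence hypothesis of Corollary~\ref{ThEyU}. First I would record the two relevant boundary data of~(\ref{SolTemp}): $\theta_1^\star(0,t)=-D_0$ and, by differentiating in $x$, $\theta_{1_x}^\star(0,t)=D_0/(\sqrt{\pi\alpha_1 t}\,\erf(\xi^\star))$. Inserting this pair into the convective law~(\ref{CondConv}) with external bulk temperature $-D_\infty$ shows, for a fixed $D_\infty>D_0$, that it holds exactly when $h_0$ takes the value~(\ref{h0}); this is the equivalence already displayed before the theorem, so the coincidence half of the claim reduces to checking that $\xi^\star$ is admissible for~(\ref{PbCondConv}), i.e.\ that it is a solution of~(\ref{EqXi}).

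The engine of that check is a single identity: from~(\ref{h0}) one has $k_1/(h_0\sqrt{\pi\alpha_1})=(D_\infty-D_0)\erf(\xi^\star)/D_0$, hence
$$\erf(\xi^\star)+\frac{k_1}{h_0\sqrt{\pi\alpha_1}}=\frac{D_\infty}{D_0}\,\erf(\xi^\star).$$
Substituting this into~(\ref{W}) makes the factor $D_\infty$ cancel against $1/D_0$, so $W(\xi^\star)=W_0(\xi^\star)$ and therefore $\mu=\sqrt{\alpha_{12}}\,W(\xi^\star)=\mu^\star$; the same substitution in~(\ref{FBis}) and~(\ref{G}) gives $F(\xi^\star)=(D_0/D_\infty)F_0(\xi^\star)$ and $G(\xi^\star)=G_0(\xi^\star)$. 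Consequently equation~(\ref{EqXi}) at $\xi^\star$ is equivalent to $F_0(\xi^\star)=\frac{l\sqrt{\pi}}{D_0 c_1}G_0(\xi^\star)$, i.e.\ to~(\ref{EqXiStar}), which $\xi^\star$ satisfies by construction. Theorem~\ref{ThCharact} then yields the solution~(\ref{SolConv}) of~(\ref{PbCondConv}) with $\xi=\xi^\star$; since $\mu=\mu^\star$ and the prefactor $D_\infty\erf(\xi^\star)/(\erf(\xi^\star)+k_1/(h_0\sqrt{\pi\alpha_1}))$ collapses to $D_0$ by the identity above, one reads off $\theta_1=\theta_1^\star$ and $\theta_2=\theta_2^\star$, which proves the coincidence. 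This bookkeeping — essentially the computation before Theorem~\ref{ThConvToTemp} run backwards — is the only nontrivial part, and it is the step I expect to require the most care.

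For the estimate on $\xi^\star$: because the solution~(\ref{SolConv}) exists, Corollary~\ref{ThEyU} forces $h_0>h_0^\star$ with $h_0^\star$ given by~(\ref{h0Star}); inserting the value~(\ref{h0}) of $h_0$ and solving for $\erf(\xi^\star)$ — dividing by the positive quantity $D_\infty-D_0$ and using $\sqrt{\alpha_2/\alpha_1}=1/\sqrt{\alpha_{12}}$ — gives~(\ref{IneqXiStarPreliminar}), that is $\erf(\xi^\star)<2D_\infty D_0\eta/(\gamma(D_\infty-D_0)\sqrt{\pi\alpha_{12}})$. Since $\xi^\star>0$ one also has the automatic bound $\erf(\xi^\star)<1$, and combining the two yields the stated minimum, with $\eta$ the unique solution of~(\ref{EqEta0}). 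No difficulty is expected here beyond keeping track of the inequality direction; the theorem is in effect a consolidation of the discussion already carried out in the text.
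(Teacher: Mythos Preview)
Your proposal is correct and follows essentially the same approach as the paper: the paper's argument, laid out in the paragraph preceding the theorem, likewise verifies that $\theta_1^\star$ satisfies the convective condition~(\ref{CondConv}) precisely when $h_0$ is given by~(\ref{h0}), asserts (without writing out the algebra) that $\xi^\star$ then solves~(\ref{EqXi}), invokes Theorem~\ref{ThCharact}, and reads off~(\ref{IneqXiStarPreliminar}) from the existence criterion~(\ref{IneqH0}). Your explicit identities $W(\xi^\star)=W_0(\xi^\star)$, $G(\xi^\star)=G_0(\xi^\star)$, $F(\xi^\star)=(D_0/D_\infty)F_0(\xi^\star)$ are exactly the bookkeeping the paper leaves to the reader.
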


Therefore, in the sense established by Theorems \ref{ThConvToTemp} and \ref{ThTempToConv}, we have that problems (\ref{PbCondConv}) and (\ref{PbCondConv}$^\star$) are equivalent.

\begin{corollary}
The parameter $\xi^\star$ that characterizes the free boundary separating the solid and mushy regions in problem (\ref{PbCondConv}$^\star$) verifies the following inequality:
\begin{equation}\label{IneqXiStar}
\erf(\xi^\star)\leq\min\left\{1,\frac{2D_0\eta}{\gamma\sqrt{\pi\alpha_{12}}}\right\},
\end{equation}
where $\eta$ is the only one solution to equation (\ref{EqEta0}). 
\end{corollary}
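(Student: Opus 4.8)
The plan is to obtain (\ref{IneqXiStar}) as a limiting consequence of Theorem \ref{ThTempToConv}, letting the external bulk temperature $D_\infty$ grow without bound. The first point I would stress is that, once problem (\ref{PbCondConv}$^\star$) is fixed, the value $D_0$, all the physical parameters, and hence the unique positive root $\xi^\star$ of equation (\ref{EqXiStar}) are fixed as well; likewise $\eta$, being the unique solution of equation (\ref{EqEta0}), depends only on $\frac{\gamma k_1}{\theta_0 k_2}$ and $\frac{(1-\epsilon)l}{\theta_0 c_2}$. In particular, neither $\xi^\star$ nor $\eta$ involves $D_\infty$, so in Theorem \ref{ThTempToConv} the only quantity that changes as we vary $D_\infty \in (D_0,+\infty)$ (with $h_0$ taken from (\ref{h0})) is the right-hand side of the inequality displayed there.

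Next I would rewrite the nontrivial part of that right-hand side as $\frac{2D_0\eta}{\gamma\sqrt{\pi\alpha_{12}}}\cdot\frac{D_\infty}{D_\infty-D_0}$ and observe that the map $D_\infty\mapsto\frac{D_\infty}{D_\infty-D_0}$ is strictly decreasing on $(D_0,+\infty)$ and tends to $1$ as $D_\infty\to+\infty$. Since, by Theorem \ref{ThTempToConv}, the inequality $\erf(\xi^\star)<\frac{2D_0\eta}{\gamma\sqrt{\pi\alpha_{12}}}\cdot\frac{D_\infty}{D_\infty-D_0}$ holds for every admissible $D_\infty$, letting $D_\infty\to+\infty$ yields $\erf(\xi^\star)\leq\frac{2D_0\eta}{\gamma\sqrt{\pi\alpha_{12}}}$; the strict sign need not survive the passage to the limit, which is exactly why (\ref{IneqXiStar}) is stated with a non-strict inequality.

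Finally, since $\xi^\star>0$ one has $\erf(\xi^\star)<1$ automatically, and combining this with the bound just derived gives $\erf(\xi^\star)\leq\min\left\{1,\frac{2D_0\eta}{\gamma\sqrt{\pi\alpha_{12}}}\right\}$, which is (\ref{IneqXiStar}). I do not anticipate any real obstacle here: the only things that require a word of justification are that $\xi^\star$ and $\eta$ are genuinely independent of $D_\infty$ (so that the left-hand side stays frozen while the limit is taken) and the elementary fact that the monotone ratio $D_\infty/(D_\infty-D_0)$ decreases to $1$; the rest is immediate.
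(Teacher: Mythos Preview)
Your proposal is correct and follows essentially the same approach as the paper's proof, which consists of a single sentence: ``It follows by making $D_\infty\to\infty$ into both sides of (\ref{IneqXiStarPreliminar}).'' Your version simply spells out the details---independence of $\xi^\star$ and $\eta$ from $D_\infty$, the monotone decrease of $D_\infty/(D_\infty-D_0)$ to $1$, and the loss of strictness in the limit---that the paper leaves implicit.
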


\begin{proof}
It follows by making $D_\infty\to\infty$ into both sides of (\ref{IneqXiStarPreliminar}).
\end{proof}

\begin{remark}
Inequality (\ref{IneqXiStar}), which is physically relevant when $\frac{2D_0\eta}{\gamma\sqrt{\pi\alpha_{12}}}<1$, has already been obtained in \cite{Ta1990} through the relationship between problem (\ref{PbCondConv}$^\star$) and the problem consisting in (\ref{EqTheta1Conv}) to (\ref{MushyAncho}) and the following flux boundary condition:
\begin{equation*}
k_1\theta_{1x}(0,t)=\frac{q_0}{\sqrt{t}},\quad t>0\hspace{2cm}(q_0>0).
\end{equation*}
\end{remark}

\section{Assymptotic behaviour when $h_0\to+\infty$}\label{SectAssymp}
From a physical point of view, if we were able to consider an infinite heat transfer coefficient at $x=0$, the convective boundary condition (\ref{CondConv}) could be replaced by the temperature boundary condition $(1\text{h}^\star)_\infty$. Thus, it is reasonable to expect that the solution to problem (\ref{PbCondConv}) converges to the solution to problem (\ref{PbCondConv}$^\star$)$_\infty$ when the heat transfer coefficient increases its value. In this section we will analyse this sort of convergence, which was already proved for some other Stefan problems in \cite{CeTa2014,CeTa2015,CeTa2016}.

For each $h_0$ satisfying (\ref{IneqH0}) we will consider problem (\ref{PbCondConv}) and we will denote its solution as $\theta_{1,h_0}$, $\theta_{2,h_0}$, $s_{h_0}$, $r_{h_0}$. The solution to problem (\ref{PbCondConv}$^\star$)$_\infty$ will be referred to as $\theta_{1,\infty}^\star$, $\theta_{2,\infty}^\star$, $s_\infty^\star$, $r_\infty^\star$.

The main result of this section is the following:

\begin{theorem}\label{ThOrden}
The solution to problem (\ref{PbCondConv}) given by (\ref{SolConv}) punctually converges to the solution to problem (\ref{PbCondConv}$^\star$)$_\infty$ given by (\ref{SolTemp}), when $h_0\to\infty$. Moreover, the following estimations holds when $h_0\to\infty$:
\begin{subequations}\label{SolOrden}
\begin{align}
\label{theta1Orden}&\theta_{1,h_0}(x,t)-\theta_{1,\infty}(x,t)=\mathcal{O}\left(\frac{1}{h_0}\right)&\forall\,x>0,\,t>0\\
\label{theta2Orden}&\theta_{2,h_0}(x,t)-\theta_{2,\infty}(x,t)=\mathcal{O}\left(\frac{1}{h_0}\right)&\forall\,x>0,\,t>0\\
\label{sOrden}&s_{h_0}(t)-s_\infty(t)=\mathcal{O}\left(\frac{1}{h_0}\right)&t>0\\
\label{rOrden}&r_{h_0}(t)-r_\infty(t)=\mathcal{O}\left(\frac{1}{h_0}\right)&t>0.
\end{align}
\end{subequations}
\end{theorem}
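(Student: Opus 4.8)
The plan is to exploit the fact that the entire similarity solution (\ref{SolConv}) is an explicit function of the single root $\xi=\xi_{h_0}$ of the transcendental equation (\ref{EqXi}), of $\mu_{h_0}=\sqrt{\alpha_{12}}W(\xi_{h_0})$, and of the small parameter $\varepsilon_{h_0}:=\frac{k_1}{h_0\sqrt{\pi\alpha_1}}=\mathcal{O}\!\left(\frac1{h_0}\right)$. Setting $\varepsilon_{h_0}=0$ turns the functions $W$, $F_1$, $F$, $G$ of (\ref{W}), (\ref{F1}), (\ref{F}), (\ref{G}) into $W_0$, $\exp(-x^2)/\erf(x)$, $F_0$, $G_0$ of (\ref{W0F0G0}) with $D_0$ replaced by $D_\infty$, i.e. exactly the data of problem (\ref{PbCondConv}$^\star$)$_\infty$; likewise the term $\frac{k_1}{h_0\sqrt{\pi\alpha_1}}$ in (\ref{SolConv}) disappears in the limit. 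Hence it suffices to control the dependence of $\xi_{h_0}$ on $\varepsilon_{h_0}$ near $\varepsilon=0$, after which every estimate in (\ref{SolOrden}) reduces to $|\xi_{h_0}-\xi^\star_\infty|=\mathcal{O}(1/h_0)$ plus routine first-order bookkeeping on the explicit formulas.

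The key step is the root $\xi_{h_0}$. I would view (\ref{EqXi}) as $\Phi(x,\varepsilon)=0$, where, writing the $\varepsilon$-dependent versions of $F_1$, $W$, $G$ as $F_1^\varepsilon$, $W^\varepsilon$, $G^\varepsilon$,
\[
\Phi(x,\varepsilon)=F_1^\varepsilon(x)-\frac{\theta_0\sqrt{k_2c_2}}{D_\infty\sqrt{k_1c_1}}F_2\!\left(\sqrt{\alpha_{12}}W^\varepsilon(x)\right)-\frac{l\sqrt{\pi}}{D_\infty c_1}G^\varepsilon(x).
\]
This $\Phi$ is $C^\infty$ on $(0,\infty)\times[0,\delta)$ for small $\delta>0$, with $\Phi(\xi_{h_0},\varepsilon_{h_0})=0$ and $\Phi(\xi^\star_\infty,0)=0$. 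By Lemma \ref{LeFGProp} (whose elementary computations apply verbatim at $\varepsilon=0$), the map $x\mapsto\Phi(x,\varepsilon)$ has a strictly negative derivative for every admissible $\varepsilon$; in particular $\partial_x\Phi(\xi^\star_\infty,0)<0\neq 0$. The implicit function theorem then yields a $C^1$ branch $\varepsilon\mapsto\tilde\xi(\varepsilon)$ with $\tilde\xi(0)=\xi^\star_\infty$ and $\Phi(\tilde\xi(\varepsilon),\varepsilon)=0$, hence $\tilde\xi(\varepsilon)-\xi^\star_\infty=\mathcal{O}(\varepsilon)$. It remains to identify $\xi_{h_0}$ with $\tilde\xi(\varepsilon_{h_0})$ for $h_0$ large: by the uniqueness of the positive root (Theorem \ref{ThEyUEqXi}) together with the sign pattern of $\Phi(\cdot,\varepsilon)$ (positive on a fixed interval $(0,a]$, negative on $[b,\infty)$ for fixed $0<a<\xi^\star_\infty<b$, with the sign change forced into a shrinking neighborhood of $\xi^\star_\infty$ by the uniform-on-compacts convergence $\Phi(\cdot,\varepsilon)\to\Phi(\cdot,0)$), a compactness argument forces $\xi_{h_0}\to\xi^\star_\infty$, so $\xi_{h_0}=\tilde\xi(\varepsilon_{h_0})$ for large $h_0$ and $\xi_{h_0}-\xi^\star_\infty=\mathcal{O}(1/h_0)$.

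Then I would propagate the estimate. From (\ref{mu}), $\mu_{h_0}-\mu^\star_\infty=\sqrt{\alpha_{12}}\bigl(W^{\varepsilon_{h_0}}(\xi_{h_0})-W^0(\xi^\star_\infty)\bigr)$; since $W^\varepsilon(x)-W^0(x)=\mathcal{O}(\varepsilon)$ uniformly on compacts and $W^0$ is locally Lipschitz, the $\xi$-estimate gives $\mu_{h_0}-\mu^\star_\infty=\mathcal{O}(1/h_0)$, whence (\ref{sOrden}) and (\ref{rOrden}) from $s_{h_0}(t)=2\xi_{h_0}\sqrt{\alpha_1t}$ and $r_{h_0}(t)=2\mu_{h_0}\sqrt{\alpha_2t}$. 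For the temperatures, regard $\theta_{1,h_0}(x,t)$ in (\ref{SolConv}) as a function of $(\xi_{h_0},\varepsilon_{h_0})$ and of $\eta_1=\frac{x}{2\sqrt{\alpha_1t}}$; since $\xi^\star_\infty>0$, the denominators $\erf(\xi)+\varepsilon$ stay bounded below by a positive constant for $(\xi,\varepsilon)$ near $(\xi^\star_\infty,0)$, so this function is $C^1$ in $(\xi,\varepsilon)$ with bounds uniform in $\eta_1\in[0,+\infty)$ (as $\erf$ and its argument range over bounded sets), and (\ref{theta1Orden}) follows uniformly in $(x,t)$. Likewise $\theta_{2,h_0}$ depends on $h_0$ only through $\mu_{h_0}$, is $C^1$ in $\mu$ near $\mu^\star_\infty>0$ uniformly in $x$, and the $\mu$-estimate yields (\ref{theta2Orden}). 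Pointwise (indeed uniform) convergence is immediate from these four estimates.

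The main obstacle is the middle step: showing rigorously that the root $\xi_{h_0}$ stays in a fixed compact subinterval of $(0,\infty)$ as $h_0\to\infty$ and converges to $\xi^\star_\infty$, so that it genuinely coincides with the branch produced by the implicit function theorem rather than with some other root of the limiting equation or an escaping sequence. Once this uniform localization is secured, the derivative lower bound $\partial_x\Phi<0$ furnished by Lemma \ref{LeFGProp} makes the first-order estimate automatic, and the remaining items are elementary differentiations of the explicit formulas (\ref{SolConv}).
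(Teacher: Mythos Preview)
Your proposal is correct and takes a genuinely different route from the paper. The paper does not invoke the implicit function theorem at all: it introduces the quotient $J_{h_0}(x)=F_{h_0}(x)/G_{h_0}(x)$ and its limit $J_\infty$, proves by hand that $\{J_{h_0}\}$ is monotone increasing in $h_0$ (Lemma~\ref{LeJConv}), deduces that $\{\xi_{h_0}\}$ is an \emph{increasing} sequence converging to $\xi_\infty$, and then obtains the rate $|\xi_\infty-\xi_{h_0}|\le \mathcal{M}/h_0$ by a geometric secant/tangent argument exploiting the convexity of $J_\infty$ together with explicit Lipschitz bounds (Lemma~\ref{LeOrden}). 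This yields fully explicit constants $\mathcal{M}$, $\mathcal{M}_{\theta_1}$, $\mathcal{M}_{\theta_2}$ in terms of the physical data. Your approach, by contrast, packages everything into $\Phi(x,\varepsilon)=0$ and reads off the $\mathcal{O}(\varepsilon)$ dependence directly from the $C^1$ regularity of the implicit branch; this is shorter and conceptually cleaner, but gives no explicit constants. Regarding the ``main obstacle'' you flag: the localization of $\xi_{h_0}$ is actually immediate from your own observation that $\partial_x\Phi(x,\varepsilon)<0$ for \emph{all} $x>0$ and all small $\varepsilon$ (not just at the root), so global uniqueness of the root for each $\varepsilon$ follows, and the implicit-function branch is automatically the only candidate---no separate compactness argument is needed. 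The paper resolves this same issue instead through the monotonicity of $h_0\mapsto J_{h_0}$.
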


The key to prove Theorem \ref{ThOrden} is the fact that $\xi_{h_0}-\xi_\infty=\mathcal{O}\left(\frac{1}{h_0}\right)$ when $h_0\to\infty$. We will first prove it and then we will back and give the demonstration of Theorem \ref{ThOrden}.

Hereinafter, we will refer to the functions $F$, $G$, $W$, $F_1$ related to problem (\ref{PbCondConv}), as $F_{h_0}$, $G_{h_0}$, $W_{h_0}$, $F_{1,h_0}$, respectively. Analogously, we will refer to the functions $F_0$, $G_0$, $W_0$ associated with condition $(1\text{h}^\star)_\infty$, as $F_\infty$, $G_\infty$, $W_\infty$. That is, $F_\infty$, $G_\infty$, $W_\infty$ will be the functions defined by:
\begin{subequations}\label{FInfGInfWInf}
\begin{align}
\label{FInf}&F_\infty(x)=\frac{\exp(-x^2)}{\erf(x)}-\frac{\theta_0\sqrt{k_2c_2}}{D_\infty\sqrt{k_1c_1}}\frac{\exp\left(-\alpha_{12}W_\infty^2(x)\right)}{\erfc\left(\sqrt{\alpha_{12}}W_\infty(x)\right)}&x>0\\
\label{GInf}&G_\infty(x)=x+\frac{(1-\epsilon)\gamma\sqrt{\pi}}{2D_\infty}\exp(x^2)\erf(x)&x>0\\
\label{WInf}&W_\infty(x)=x+\frac{\gamma\sqrt{\pi}}{2D_\infty}\exp(x^2)\erf(x)&x>0.
\end{align}
\end{subequations}
Finally, let be $J_{h_0}$ $J_\infty$ the functions defined by:
\begin{subequations}\label{JJInf}
\begin{align}
\label{J}&J_{h_0}(x)=\frac{F_{h_0}(x)}{G_{h_0}(x)},&x>0\\
\label{JInf}&J_\infty(x)=\frac{F_\infty(x)}{G_\infty(x)},&x>0.
\end{align}
\end{subequations}
By using the functions $H_{h_0}$, $H_\infty$ defined by:
\begin{subequations}\label{HHInf}
\begin{align}
\label{H}&H_{h_0}(x)=\frac{G_{h_0}(x)}{F_{1,h_0}(x)},&x>0\\
\label{HInf}&H_\infty(x)=\frac{G_\infty(x)}{F_{1,\infty(x)}},&x>0,
\end{align}
\end{subequations}
where the $F_{1,\infty}$ is the function given by:
\begin{equation*}
F_{1,\infty}(x)=\frac{\exp(-x^2)}{\erf(x)},\quad x>0,
\end{equation*}
it follows that (\ref{JJInf}) can be written as:
\begin{subequations}\label{JJInfBis}
\begin{align}
\label{JBis}&J_{h_0}(x)=\frac{1}{H_{h_0}(x)}-\frac{\theta_0\sqrt{k_2c_2}}{D_\infty\sqrt{k_1c_1}}\frac{F_2\left(\sqrt{\alpha_{12}}W_{h_0}(x)\right)}{G_{h_0}(x)},&x>0\\
\label{JInfBis}&J_{\infty}(x)=\frac{1}{H_{\infty}(x)}-\frac{\theta_0\sqrt{k_2c_2}}{D_\infty\sqrt{k_1c_1}}\frac{F_2\left(\sqrt{\alpha_{12}}W_{\infty}(x)\right)}{G_{\infty}(x)},&x>0
\end{align}
\end{subequations}

\begin{lemma}\label{LeJProp}
$$\vspace*{-1cm}$$
\begin{enumerate}
\item The function $J_{h_0}$ defined by (\ref{J}) verifies:
\begin{subequations}
\begin{align}
\label{J0+}&J_{h_0}(0^+)>0&\forall\,h_0\geq h_1^\star\\
\label{JDer}&J'_{h_0}(x)<0&\forall\,x\in(0,\nu_{h_0}),\,\forall\,h_0\geq h_1^\star,
\end{align}
\end{subequations}
where $h_1^\star$ is a positive number such that:
\begin{equation}\label{h1Star}
\frac{1}{h_1^\star}F_2\left(\frac{\gamma k_1}{2D_\infty\sqrt{\alpha_2}}\frac{1}{h_1^\star}\right)<\zeta,
\end{equation}
with:
\begin{equation}\label{zeta}
\zeta=\frac{D_\infty\sqrt{\pi}}{\theta_0\sqrt{\rho k_2c_2}},
\end{equation}
and $\nu_{h_0}$ is the only one solution to the equation:
\begin{equation}\label{EqNu}
J_{h_0}(x)=0,\quad x>0,\,h_0\geq h_1^\star.
\end{equation}
\item The function $J_\infty$ defined by (\ref{JInf}) verifies:
\begin{subequations}
\begin{align}
\label{JInf0+}&J_\infty(0^+)=+\infty\\
\label{JInfDer}&J'_\infty(x)<0,\quad\forall\,x\in(0,\nu_\infty),
\end{align}
\end{subequations}
where $\nu_\infty$ is the only one solution to the equation:
\begin{equation}\label{EqNuInf}
J_\infty(x)=0,\quad x>0.
\end{equation}
\end{enumerate}
\end{lemma}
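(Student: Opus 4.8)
The plan is to derive every assertion from the monotonicity information already collected in Lemma~\ref{LeFGProp} for $F_{h_0}$ and $G_{h_0}$, together with the corresponding facts for the limiting functions $W_\infty$, $F_{1,\infty}$, $G_\infty$, $F_\infty$. Those limiting facts---either quoted from \cite{Ta1990} or re-derived by exactly the elementary computations behind Lemma~\ref{LeFGProp} (they are the $h_0=+\infty$ specializations of $W$, $F_1$, $G$, $F$, obtained by dropping the term $\frac{k_1}{h_0\sqrt{\alpha_1\pi}}$)---are: $W_\infty(0^+)=0$, $W_\infty(+\infty)=+\infty$, $W_\infty'>0$; $F_{1,\infty}(0^+)=+\infty$, $F_{1,\infty}(+\infty)=0$, $F_{1,\infty}'<0$; $G_\infty(0^+)=0$, $G_\infty(+\infty)=+\infty$, $G_\infty'>0$; and $F_\infty(0^+)=+\infty$, $F_\infty(+\infty)=-\infty$, $F_\infty'<0$. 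The structural observation that organizes the whole proof is that $G_{h_0}$ and $G_\infty$ are strictly positive on $(0,+\infty)$, so $J_{h_0}=F_{h_0}/G_{h_0}$ has the same sign and the same zeros as $F_{h_0}$, and likewise $J_\infty$ mirrors $F_\infty$; since $F_{h_0}$ is strictly decreasing with $F_{h_0}(+\infty)=-\infty$, equation~(\ref{EqNu}) has at most one root, and exactly one as soon as $F_{h_0}(0^+)>0$---and the same for~(\ref{EqNuInf}).

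For~(\ref{J0+}) I would start from the expression for $F_{h_0}(0^+)=F(0^+)$ in~(\ref{FProp}) and simply rewrite the inequality $F_{h_0}(0^+)>0$: using $\alpha_1=k_1/(\rho c_1)$ to simplify $\frac{h_0\sqrt{\alpha_1\pi}}{k_1}=\frac{h_0\sqrt{\pi}}{\sqrt{k_1\rho c_1}}$ and then dividing through by $\frac{\theta_0\sqrt{k_2c_2}}{\sqrt{k_1c_1}}$, the inequality turns into $\frac1{h_0}F_2\bigl(\frac{\gamma k_1}{2D_\infty\sqrt{\alpha_2}}\cdot\frac1{h_0}\bigr)<\zeta$ with $\zeta$ as in~(\ref{zeta}). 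Denote the left-hand side by $\phi(h_0)$. With $u=1/h_0$ this reads $\phi=u\,F_2(cu)$ with $c=\frac{\gamma k_1}{2D_\infty\sqrt{\alpha_2}}>0$, a product of two positive strictly increasing functions of $u$, so $\phi$ is strictly decreasing in $h_0$; moreover $\phi(h_0)\to 0$ as $h_0\to+\infty$ because $F_2(0^+)=1$. This both guarantees the existence of a number $h_1^\star$ satisfying~(\ref{h1Star}) and yields $\phi(h_0)\le\phi(h_1^\star)<\zeta$ for every $h_0\ge h_1^\star$, i.e.\ $F_{h_0}(0^+)>0$; combined with $G_{h_0}(0^+)>0$ from Lemma~\ref{LeFGProp} this gives $J_{h_0}(0^+)>0$, and also the existence and uniqueness of the root $\nu_{h_0}$ of~(\ref{EqNu})---namely the unique zero of $F_{h_0}$.

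For~(\ref{JDer}): on $(0,\nu_{h_0})$ we have $F_{h_0}>0$ (it decreases from a positive value to $0$ at $\nu_{h_0}$), $F_{h_0}'<0$, and $G_{h_0}>0$, $G_{h_0}'>0$, so in $J_{h_0}'=(F_{h_0}'G_{h_0}-F_{h_0}G_{h_0}')/G_{h_0}^2$ both terms of the numerator are negative and $J_{h_0}'<0$ there. Part~2 is the $h_0=+\infty$ analogue, handled the same way: since $G_\infty(0^+)=0$ with $G_\infty>0$ near $0$ and $F_\infty(0^+)=+\infty$ we get $J_\infty(0^+)=+\infty$, which is~(\ref{JInf0+}); since $F_\infty$ is strictly decreasing from $+\infty$ to $-\infty$, equation~(\ref{EqNuInf}) has a unique root $\nu_\infty$, equal to the zero of $F_\infty$, with $F_\infty>0$ on $(0,\nu_\infty)$; and the identical quotient computation, now for $F_\infty$, $G_\infty$, gives $J_\infty'<0$ on $(0,\nu_\infty)$, which is~(\ref{JInfDer}).

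The only genuinely delicate point is in the second paragraph: carrying out the algebraic reduction of $F_{h_0}(0^+)>0$ to the precise form~(\ref{h1Star}) without slips, and then using the strict monotonicity and the vanishing at infinity of $\phi(h_0)$ to conclude both that $h_1^\star$ exists and that the single inequality~(\ref{h1Star}) at $h_1^\star$ propagates to all $h_0\ge h_1^\star$. Everything else is sign bookkeeping for quotients of monotone functions, plus the elementary verification of the limiting behaviour of $W_\infty$, $F_{1,\infty}$, $G_\infty$, $F_\infty$, which is a verbatim copy of the argument behind Lemma~\ref{LeFGProp}.
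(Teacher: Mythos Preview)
Your proposal is correct and follows essentially the same route as the paper. The only cosmetic difference is that the paper computes $J_{h_0}(0^+)$ via the decomposition $J_{h_0}=\dfrac{1}{H_{h_0}}-\dfrac{\theta_0\sqrt{k_2c_2}}{D_\infty\sqrt{k_1c_1}}\dfrac{F_2(\sqrt{\alpha_{12}}W_{h_0})}{G_{h_0}}$ to obtain an explicit formula, while you simply observe that $J_{h_0}(0^+)$ shares the sign of $F_{h_0}(0^+)$ (since $G_{h_0}(0^+)>0$) and quote $F_{h_0}(0^+)$ directly from Lemma~\ref{LeFGProp}; both reductions lead to the same inequality~(\ref{h1Star}), and from there the monotonicity argument for the auxiliary function $h_0\mapsto\frac{1}{h_0}F_2\bigl(\frac{\gamma k_1}{2D_\infty\sqrt{\alpha_2}}\frac{1}{h_0}\bigr)$, the existence and uniqueness of $\nu_{h_0}$, the quotient-rule sign analysis for $J_{h_0}'$, and the treatment of the limiting case $J_\infty$ are all identical.
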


\begin{proof}
$$\vspace*{-1cm}$$
\begin{enumerate}
\item We have from Lemma \ref{LeFGProp} that:
\begin{subequations}\label{Term0+}
\begin{align*}
&\frac{1}{H_{h_0}(0^+)}=\frac{2D_\infty\alpha_1\sqrt{\pi}}{(1-\epsilon)\gamma}\left(\frac{h_0}{k_1}\right)^2\\
&\frac{F_2\left(\sqrt{\frac{\alpha_1}{\alpha_2}}W_{h_0}(0^+)\right)}{G_{h_0}(0^+)}=\frac{2D_\infty h_0\sqrt{\alpha_1}}{(1-\epsilon)\gamma k_1}F_2\left(\frac{\gamma k_1}{2D_\infty h_0\sqrt{\alpha_2}}\right).
\end{align*}
\end{subequations}
Then:
\begin{equation}\label{J0+Bis}
J_{h_0}(0^+)=\frac{2D_\infty\alpha_1\sqrt{\pi}}{(1-\epsilon)\gamma}\left(\frac{h_0}{k_1}\right)^2\left(1-\frac{1}{h_0\zeta}F_2\left(\frac{\gamma k_1}{2D_\infty h_0\sqrt{\alpha_2}}\right)\right),
\end{equation}
where $\zeta$ is defined by (\ref{zeta}).
Therefore, $J_{h_0}(0^+)>0$ if and only if:
\begin{equation}\label{J0+Pos}
\frac{1}{h_0}F_2\left(\frac{\gamma k_1}{2D_\infty\sqrt{\alpha_2}}\frac{1}{h_0}\right)<\zeta.
\end{equation}
Let be $F_5$ the function defined by:
\begin{equation*}
F_5(x)=\frac{1}{x}F_2\left(\frac{1}{x}\right),\quad x>0.
\end{equation*}
Since $F_5$ verifies:
\begin{equation*}
F_5(0^+)=+\infty,
\hspace{1cm}
F_5(+\infty)=0,
\hspace{1cm}
F_5'(x)<0\quad\forall\,x>0,
\end{equation*}
it follows that there exists a positive number $h_1^\star\geq h_0^\star$ which verifies (\ref{h1Star}). Moreover, as we know from Lemma \ref{LeFGProp} that $F_2$ is an increasing function, we have that (\ref{J0+Pos}) holds for any $h_0\geq h_1^\star$.

It follows from (\ref{J0+}) and the properties of the function $F_{h_0}$ given in Lemma \ref{LeFGProp}, that there exists an only one solution $\nu_{h_0}$ to the equation (\ref{EqNu}) for any $h_0\geq h_1^\star$. Moreover, since:
\begin{equation}\label{FPos}
F_{h_0}(x)>0\quad\forall\,x\in(0,\nu_{h_0}),\,\forall\,h_0\geq h_1^\star,
\end{equation} 
it follows from the Leibnitz rule and the properties of the functions $F'_{h_0}$, $G'_{h_0}$ given in Lemma \ref{LeFGProp} that (\ref{JDer}) holds.
\item It is similar to the proof given for $J_{h_0}$ in the previous item.
\end{enumerate}
\end{proof}

\begin{lemma}\label{LeJConv}
$$\vspace*{-1cm}$$
\begin{enumerate}
\item Let be $h_1^\star$ as in Lemma \ref{LeJProp}. The sequence of functions $\left\{J_{h_0}\right\}_{h_0\geq h_1^\star}$ has the following properties:
\begin{enumerate}
\item $J_{h_0}(x)\to J_\infty(x)$ when $h_0\to\infty$, for all $x\in\mathbb{R}^+$.
\item If $h_1^\star\leq h_0^{(1)}<h_0^{(2)}$, then:
\begin{equation}\label{JMonot}
J_{h_0^{(1)}}(x)<J_{h_0^{(2)}}(x)\quad\forall\,x\in(0,\nu_{h_0^{(1)}}),
\end{equation}
where $\nu_{h_0^{(1)}}$ is defined as in Lemma \ref{LeJProp}.
\end{enumerate}  
\item $\left\{\xi_{h_0}\right\}_{h_0\geq h_1^\star}$ is an increasing sequence of numbers which converges to $\xi_\infty$ when $h_0\to\infty$.
\end{enumerate}
\end{lemma}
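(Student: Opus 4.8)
\noindent\emph{Proof idea.}
Write $c=\frac{l\sqrt{\pi}}{D_\infty c_1}>0$ and $\beta_{h_0}=\frac{k_1}{h_0\sqrt{\pi\alpha_1}}$, so that $\beta_{h_0}\to0^+$ as $h_0\to\infty$; by Theorem~\ref{ThCharact} and Lemma~\ref{LeJProp}, for $h_0\ge h_1^\star$ the number $\xi_{h_0}$ is the unique zero of $J_{h_0}-c$ in $(0,\nu_{h_0})$ (for $x>\nu_{h_0}$ one has $F_{h_0}(x)<0$, whence $J_{h_0}(x)<0<c$), and likewise $\xi_\infty$ is the unique zero of $J_\infty-c$ in $(0,\nu_\infty)$. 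For item~1(a) I would simply let $h_0\to\infty$ in (\ref{JJInfBis}): since $\beta_{h_0}\to0$ one gets $F_{1,h_0}(x)\to F_{1,\infty}(x)$, $W_{h_0}(x)\to W_\infty(x)$ and $G_{h_0}(x)\to G_\infty(x)$ for every $x>0$, and, $F_2$ being continuous and $G_\infty(x)>0$, this yields $J_{h_0}(x)\to J_\infty(x)$; in particular $F_{h_0}(x)\to F_\infty(x)$.

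For item~1(b) the plan is to fix $x>0$ and use the common quantity $u=\erf(x)+\beta$: by (\ref{W}), (\ref{F1}), (\ref{G}) the value $F_{1,h_0}(x)=\frac{\exp(-x^2)}{u}$ is strictly decreasing in $u$ while $W_{h_0}(x)$ and $G_{h_0}(x)$ are affine and strictly increasing in $u$, so by (\ref{FBis}) and $F_2'>0$ (Lemma~\ref{LeFGProp}) the numerator $F_{h_0}(x)$ is strictly decreasing in $u$ and the denominator $G_{h_0}(x)$ strictly increasing and positive in $u$. Since $u$ is increasing in $\beta$, hence decreasing in $h_0$, the map $h_0\mapsto F_{h_0}(x)$ is strictly increasing and $h_0\mapsto G_{h_0}(x)$ strictly decreasing; therefore, on any range of $h_0$ on which $F_{h_0}(x)>0$, the quotient $J_{h_0}(x)=F_{h_0}(x)/G_{h_0}(x)$ is strictly increasing in $h_0$. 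The only subtlety is that the interval $(0,\nu_{h_0})$ moves with $h_0$: for $x\in(0,\nu_{h_0^{(1)}})$ one has $F_{h_0^{(1)}}(x)>0$ by (\ref{FPos}), so by the monotonicity just established $F_{h_0}(x)>0$ for all $h_0\ge h_0^{(1)}$, and applying the quotient remark on $[h_0^{(1)},h_0^{(2)}]$ gives $J_{h_0^{(1)}}(x)<J_{h_0^{(2)}}(x)$.

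For part~2 I would argue as follows. Given $h_1^\star\le h_0^{(1)}<h_0^{(2)}$, evaluate 1(b) at $x=\xi_{h_0^{(1)}}\in(0,\nu_{h_0^{(1)}})$ to obtain $J_{h_0^{(2)}}(\xi_{h_0^{(1)}})>J_{h_0^{(1)}}(\xi_{h_0^{(1)}})=c$; since moreover $F_{h_0^{(2)}}(\xi_{h_0^{(1)}})\ge F_{h_0^{(1)}}(\xi_{h_0^{(1)}})>0$, i.e.\ $\xi_{h_0^{(1)}}<\nu_{h_0^{(2)}}$, and $J_{h_0^{(2)}}$ is strictly decreasing on $(0,\nu_{h_0^{(2)}})$ (Lemma~\ref{LeJProp}) with $J_{h_0^{(2)}}(\xi_{h_0^{(2)}})=c$, strict monotonicity forces $\xi_{h_0^{(1)}}<\xi_{h_0^{(2)}}$. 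Letting $h_0^{(2)}\to\infty$ in 1(b) and using 1(a) gives $J_{h_0}(x)<J_\infty(x)$ on $(0,\nu_{h_0})$, hence $c=J_{h_0}(\xi_{h_0})<J_\infty(\xi_{h_0})$, which together with $J_\infty(\xi_\infty)=c$ and the strict decrease of $J_\infty$ on $(0,\nu_\infty)$ yields $\xi_{h_0}<\xi_\infty$; thus $L:=\lim_{h_0\to\infty}\xi_{h_0}$ exists and $0<\xi_{h_1^\star}\le L\le\xi_\infty<\nu_\infty$. To see $L=\xi_\infty$, suppose $L<\xi_\infty$ and pick $x_0\in(L,\xi_\infty)$: then $J_\infty(x_0)>c$ and $F_\infty(x_0)=J_\infty(x_0)G_\infty(x_0)>0$, so by 1(a) there is some (large) $h_0$ with $J_{h_0}(x_0)>c$ and $F_{h_0}(x_0)>0$, i.e.\ $x_0<\nu_{h_0}$; since $J_{h_0}$ is strictly decreasing on $(0,\nu_{h_0})$ and $J_{h_0}(\xi_{h_0})=c<J_{h_0}(x_0)$, this forces $x_0<\xi_{h_0}\le L$, contradicting $x_0>L$. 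Hence $L=\xi_\infty$.

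I expect the main obstacle to be item~1(b) together with the bookkeeping of the moving intervals $(0,\nu_{h_0})$: everything hinges on first establishing that $h_0\mapsto F_{h_0}(x)$ is increasing, which is precisely what propagates the sign information $F_{h_0^{(1)}}(x)>0$ on $(0,\nu_{h_0^{(1)}})$ to all larger $h_0$ and thereby legitimizes both the quotient-monotonicity step and, in part~2, the comparisons that define and bound $\xi_{h_0}$.
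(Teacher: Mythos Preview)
Your argument is correct and follows essentially the same route as the paper: both establish that $h_0\mapsto F_{h_0}(x)$ is strictly increasing and $h_0\mapsto G_{h_0}(x)$ strictly decreasing (the paper via $\partial/\partial h_0$, you via the auxiliary variable $u=\erf(x)+\beta_{h_0}$), then use positivity of $F_{h_0}$ on $(0,\nu_{h_0})$ to get monotonicity of the quotient. Your Part~2 is in fact more fleshed out than the paper's, which dispatches it in one line as ``a direct consequence'' of item~1; your explicit bookkeeping of the moving intervals $(0,\nu_{h_0})$ and the contradiction argument for $L=\xi_\infty$ are sound (one tiny remark: passing to the limit in 1(b) only gives $J_{h_0}\le J_\infty$, but that non-strict inequality is already enough for $\xi_{h_0}\le\xi_\infty$, and strictness can anyway be recovered directly since $\beta_{h_0}>0$).
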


\begin{proof}
$$\vspace*{-1cm}$$
\begin{enumerate}
\item Let be $h_1^\star$ as in Lemma \ref{LeJProp}.
\begin{enumerate}
\item It follows immediately from the definitions of $J_{h_0}$ and $J_\infty$.
\item Since:
\begin{equation}\label{F1h0}
\frac{\partial F_{1,h_0}(x)}{\partial h_0}>0\quad\forall\,x>0,
\end{equation}
it follows that:
\begin{equation*}\label{Wh0}
\frac{\partial W_{h_0}(x)}{\partial h_0}<0\quad\forall\,x>0.
\end{equation*}
Then, as we also know from Lemma \ref{LeFGProp} that $F_2$ is an increasing function, we have that:
\begin{equation*}\label{F2Wh0}
\frac{\partial}{\partial h_0}\left(F_2\left(\sqrt{\alpha_{12}}W_{h_0}(x)\right)\right)<0\quad\forall\,x>0.
\end{equation*}
Therefore:
\begin{equation}\label{Fh0}
\frac{\partial F_{h_0}(x)}{\partial h_0}>0\quad\forall\,x>0.
\end{equation} 
We also have from (\ref{F1h0}) that:
\begin{equation}\label{Gh0}
\frac{\partial G_{h_0}(x)}{\partial h_0}<0\quad\forall\,x>0.
\end{equation} 
Then, it follows from (\ref{FPos}), (\ref{Fh0}), (\ref{Gh0}) and the Leibnitz rule that:
\begin{equation*}
\frac{\partial J_{h_0}(x)}{\partial h_0}>0\quad\forall\,x\in(0,\nu_{h_0}).
\end{equation*}
Therefore, $\left\{\nu_{h_0}\right\}_{h_0\geq h_1^\star}$ is an increasing sequence of numbers and (\ref{JMonot}) holds.
\end{enumerate}
\item It is a direct consequence of the previous item and the definitions of $\xi_{h_0}$ and $\xi_\infty$ as the only one solutions to the equations (\ref{EqXi}) and (\ref{EqXiStar}), respectively.
\end{enumerate}
\end{proof}

\begin{lemma}\label{LeOrden}
Let be $h_1^\star$ as in Lemma \ref{LeJProp}. Then, there exists a number $h_0^{\star\star}\geq h_1^\star$ such that the following estimations holds when $h_0\to\infty$:
\begin{equation}\label{JOrden}
J_{h_0}(x)-J_\infty(x)=\mathcal{O}\left(\frac{1}{h_0}\right)
\quad\forall\,x\in[\xi_{h_0^{\star\star}},\nu_\infty].
\end{equation}
Therefore:
\begin{subequations}
\begin{align}
\label{xiOrden}&\xi_{h_0}-\xi_\infty=\mathcal{O}\left(\frac{1}{h_0}\right)\\
\label{muOrden}&\mu_{h_0}-\mu_\infty=\mathcal{O}\left(\frac{1}{h_0}\right)
\end{align}
\end{subequations}
when $h_0\to\infty$.
\end{lemma}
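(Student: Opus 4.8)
The plan is to reduce all four estimates to the single elementary fact that the parameter $\delta_{h_0}:=\frac{k_1}{h_0\sqrt{\alpha_1\pi}}$, through which $h_0$ enters every function involved, satisfies $\delta_{h_0}=\mathcal O(1/h_0)$. First I would fix the compact interval $[\xi_{h_0^{\star\star}},\nu_\infty]\subset(0,+\infty)$, taking $h_0^{\star\star}=h_1^\star$ (any larger value also works). By Lemma~\ref{LeJConv} the sequence $\{\xi_{h_0}\}_{h_0\geq h_1^\star}$ is increasing with limit $\xi_\infty$, and since $J_\infty(\xi_\infty)=\frac{l\sqrt\pi}{D_\infty c_1}>0=J_\infty(\nu_\infty)$ with $J_\infty$ strictly decreasing on $(0,\nu_\infty)$ (Lemma~\ref{LeJProp}), one has $0<\xi_{h_0^{\star\star}}\leq\xi_{h_0}\leq\xi_\infty<\nu_\infty$ for every $h_0\geq h_0^{\star\star}$; thus $\xi_{h_0}$ always lies in this interval. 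On it $\erf(x)$ and $\exp(\pm x^2)$ are bounded between positive constants, and the identities
\[
F_{1,h_0}(x)-F_{1,\infty}(x)=\frac{-\exp(-x^2)\,\delta_{h_0}}{\erf(x)\bigl(\erf(x)+\delta_{h_0}\bigr)},\qquad W_{h_0}(x)-W_\infty(x)=\frac{\gamma\sqrt\pi}{2D_\infty}\exp(x^2)\,\delta_{h_0},
\]
together with $G_{h_0}(x)-G_\infty(x)=\frac{(1-\epsilon)\gamma\sqrt\pi}{2D_\infty}\exp(x^2)\,\delta_{h_0}$, show that each of these three differences is $\mathcal O(1/h_0)$ uniformly in $x\in[\xi_{h_0^{\star\star}},\nu_\infty]$.

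Next, by the monotone dependence of $W_{h_0}$ on $h_0$ established in the proof of Lemma~\ref{LeJConv}, the values $\sqrt{\alpha_{12}}W_{h_0}(x)$, for $x$ in the interval and $h_0\geq h_0^{\star\star}$, stay in a fixed compact subset of $(0,+\infty)$ on which $F_2$ is $C^1$, hence Lipschitz; combining the Lipschitz bound with the estimate for $W_{h_0}-W_\infty$ gives $F_2\bigl(\sqrt{\alpha_{12}}W_{h_0}(x)\bigr)-F_2\bigl(\sqrt{\alpha_{12}}W_\infty(x)\bigr)=\mathcal O(1/h_0)$, and then (\ref{FBis}) yields $F_{h_0}(x)-F_\infty(x)=\mathcal O(1/h_0)$ uniformly. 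Writing
\[
J_{h_0}(x)-J_\infty(x)=\frac{\bigl(F_{h_0}(x)-F_\infty(x)\bigr)G_\infty(x)+F_\infty(x)\bigl(G_\infty(x)-G_{h_0}(x)\bigr)}{G_{h_0}(x)\,G_\infty(x)},
\]
and noting that $F_\infty$ and $G_\infty$ are bounded on the interval while $G_{h_0}(x)>x\geq\xi_{h_0^{\star\star}}>0$ and $G_\infty(x)>\xi_{h_0^{\star\star}}>0$, the numerator is $\mathcal O(1/h_0)$ and the denominator is bounded below by a positive constant independent of $h_0$. This establishes (\ref{JOrden}).

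To deduce (\ref{xiOrden}) I would use that $\xi_{h_0}$ and $\xi_\infty$ are level points at the \emph{same} value $C:=\frac{l\sqrt\pi}{D_\infty c_1}$, i.e. $J_{h_0}(\xi_{h_0})=J_\infty(\xi_\infty)=C$ by (\ref{EqXi}) and (\ref{EqXiStar}) (with $D_0=D_\infty$). Hence
\[
J_\infty(\xi_\infty)-J_\infty(\xi_{h_0})=J_{h_0}(\xi_{h_0})-J_\infty(\xi_{h_0})=\mathcal O(1/h_0)
\]
by (\ref{JOrden}), while the mean value theorem gives $J_\infty(\xi_\infty)-J_\infty(\xi_{h_0})=J_\infty'(c_{h_0})(\xi_\infty-\xi_{h_0})$ for some $c_{h_0}\in(\xi_{h_0},\xi_\infty)\subset[\xi_{h_0^{\star\star}},\xi_\infty]$. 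On the compact interval $[\xi_{h_0^{\star\star}},\xi_\infty]\subset(0,\nu_\infty)$ the continuous function $J_\infty'$ is strictly negative (Lemma~\ref{LeJProp}), so $|J_\infty'|\geq m$ there for some $m>0$, and therefore $|\xi_\infty-\xi_{h_0}|\leq m^{-1}\mathcal O(1/h_0)=\mathcal O(1/h_0)$. Finally, for (\ref{muOrden}) I would split
\[
\mu_{h_0}-\mu_\infty=\sqrt{\alpha_{12}}\bigl[\bigl(W_{h_0}(\xi_{h_0})-W_\infty(\xi_{h_0})\bigr)+\bigl(W_\infty(\xi_{h_0})-W_\infty(\xi_\infty)\bigr)\bigr];
\]
the first term is $\mathcal O(1/h_0)$ by the identity above (with $\xi_{h_0}$ bounded), and the second equals $W_\infty'(\tilde c_{h_0})(\xi_{h_0}-\xi_\infty)$, which is $\mathcal O(1/h_0)$ by (\ref{xiOrden}) and the boundedness of $W_\infty'$ on the interval.

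The only genuine difficulty is bookkeeping: one must keep every $\mathcal O(1/h_0)$ constant \emph{uniform} in $x$. This is exactly why the estimates are carried out on $[\xi_{h_0^{\star\star}},\nu_\infty]$ --- an interval bounded away from $x=0$, where $F_{1,h_0}$ and $J_{h_0}$ blow up, and bounded above, so that $\exp(x^2)$ stays bounded and the arguments $\sqrt{\alpha_{12}}W_{h_0}(x)$ remain in a fixed compact set on which $F_2$ is Lipschitz --- and it is why the monotone convergence $\xi_{h_0}\uparrow\xi_\infty$ from Lemma~\ref{LeJConv} is needed, both to place $\xi_{h_0}$ inside this interval and to keep the mean value points in a region where $|J_\infty'|$ has a positive lower bound.
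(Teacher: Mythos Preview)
Your proof is correct. It differs from the paper's in two respects. For (\ref{JOrden}) the paper does not use your direct quotient decomposition of $J_{h_0}-J_\infty=F_{h_0}/G_{h_0}-F_\infty/G_\infty$; instead it rewrites $J_{h_0}$ through the auxiliary function $H_{h_0}=G_{h_0}/F_{1,h_0}$ as in (\ref{JJInfBis}) and imports a bound $0<H_{h_0}(x)-H_\infty(x)\leq\mathcal{J}_1(x)/h_0$ from the external reference \cite{Ta2015-b}, which is where its threshold $h_0^{\star\star}\geq h_1^\star$ originates. Your route via the explicit identities for $F_{1,h_0}-F_{1,\infty}$, $W_{h_0}-W_\infty$ and $G_{h_0}-G_\infty$ is self-contained and allows the simple choice $h_0^{\star\star}=h_1^\star$. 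For (\ref{xiOrden}) the paper uses a geometric triangle argument: it compares the secant through $(\xi_{h_0},J_\infty(\xi_{h_0}))$ and $(\xi_\infty,J_\infty(\xi_\infty))$ with the tangent line at $\xi_\infty$, invoking \emph{convexity} of $J_\infty$ on $[\xi_{h_0^{\star\star}},\nu_\infty]$ to conclude $\tan(\alpha_{h_0})>-J_\infty'(\xi_\infty)$, so that the single value $-J_\infty'(\xi_\infty)$ serves as the lower bound. Your mean-value-theorem argument trades this for the more elementary observation that $J_\infty'$ is continuous and strictly negative on the compact set $[\xi_{h_0^{\star\star}},\xi_\infty]\subset(0,\nu_\infty)$, hence $|J_\infty'|\geq m>0$ there. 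This is arguably the more robust approach, since the paper asserts but does not verify the convexity of $J_\infty$.
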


\begin{proof}
Let be $x\in[\xi_{h_1^\star},\nu_\infty]$ and $h_0\geq h_1^\star$. We have from Lemma \ref{LeJConv} that:
\begin{equation}\label{JJInf-1}
0<J_\infty(x)-J_{h_0}(x)=
\frac{H_{h_0}(x)-H_\infty(x)}{H_\infty(x)H_{h_0}(x)}
+\frac{\theta_0\sqrt{k_2c_2}}{D_\infty\sqrt{k_1c_1}}
\left(\frac{F_2\left(\sqrt{\alpha_{12}}W_{h_0}(x)\right)}{G_{h_0}(x)}-\frac{F_2\left(\sqrt{\alpha_{12}}W_\infty(x)\right)}{G_\infty(x)}\right).
\end{equation}
On one hand, we know from \cite{Ta2015-b} that there exist a function $\mathcal{J}_1$ and a positive number $h_0^{\star\star}\geq h_1^\star$ such that:
\begin{equation}\label{HHInf-1}
0<H_{h_0}(x)-H_\infty(x)\leq \frac{\mathcal{J}_1(x)}{h_0},\quad\forall\,h_0\geq h_0^{\star\star}.
\end{equation}
Then, since $\left\{H_{h_0}\right\}_{h_0\geq h_0^{\star\star}}$ is a decreasing sequence of functions which punctually converges to $H_\infty$ when $h_0\to\infty$, it follows that:
\begin{equation}\label{HHInf-2}
0<\frac{H_{h_0}(x)-H_\infty(x)}{H_\infty(x)H_{h_0}(x)}<\frac{\mathcal{J}_2(x)}{h_0}\quad\forall\,h_0\geq h_0^{\star\star},
\end{equation}
where $\mathcal{J}_2$ is the function defined by:
\begin{equation}\label{JJ2}
\mathcal{J}_2(x)=\frac{\mathcal{J}_1(x)}{H_\infty^2(x)},\quad x>0.
\end{equation}
On the other hand, since $\left\{W_{h_0}\right\}_{h_0\geq h_0^{\star\star}}$ is a decreasing sequence of functions which converges to $W_\infty$ when $h_0\to\infty$ and $F_2$ is an increasing function, we have that:
\begin{equation}\label{F2-1}
0<F_2\left(\sqrt{\alpha_{12}}W_{h_0}(x)\right)-F_2\left(\sqrt{\alpha_{12}}W_\infty(x)\right)\quad\forall\,h_0\geq h_0^{\star\star}.
\end{equation}
Then, as $\left\{G_{h_0}\right\}_{h_0\geq h_0^{\star\star}}$ is a decreasing sequence of functions which punctually converges to $G_\infty$ when $h_0\to\infty$, it follows that:
\begin{equation}\label{F2-2}
\begin{split}
0&<\frac{F_2\left(\sqrt{\alpha_{12}}W_{h_0}(x)\right)}{G_{h_0}(x)}-\frac{F_2\left(\sqrt{\alpha_{12}}W_\infty(x)\right)}{G_\infty(x)}\\
&<\frac{1}{G_\infty(x)}\left(F_2\left(\sqrt{\alpha_{12}}W_{h_0}(x)\right)-F_2\left(\sqrt{\alpha_{12}}W_\infty(x)\right)\right)\\
&<\frac{\mathcal{J}_3(x)}{h_0}\quad\forall\,h_0\geq h_0^{\star\star}.
\end{split}
\end{equation}
where $\mathcal{J}_3$ is the function defined by:
\begin{equation}\label{JJ3}
\mathcal{J}_3(x)=\frac{L_2\gamma k_1}{2D_\infty\sqrt{\alpha_2}}\frac{\exp(x^2)}{G_\infty(x)},\quad x>0,
\end{equation}
and $L_2$ is a Lipschitz constant for $F_2$ in $\left[W_\infty(\xi_{h_0^{\star\star}}),W_{h_0^{\star\star}}(\nu_\infty)\right]$.
Henceforth, we have from (\ref{JJInf-1}), (\ref{HHInf-2}) and (\ref{F2-2}) that:
\begin{equation}\label{JJInf-2}
0<J_\infty(x)-J_{h_0}(x)<\frac{\mathcal{J}(x)}{h_0}\quad\forall\,h_0\geq h_0^{\star\star},
\end{equation}
where $\mathcal{J}$ is the function defined by:
\begin{equation}\label{JJ}
\mathcal{J}(x)=\mathcal{J}_2(x)+\frac{\theta_0\sqrt{k_2c_2}}{D_\infty\sqrt{k_1c_1}}\mathcal{J}_3(x),\quad x>0.
\end{equation}
Therefore, (\ref{JOrden}) holds.

To prove (\ref{xiOrden}), we will use some geometric arguments. Let be $T$ the right triangle with vertices $P_1(\xi_{h_0},J_{h_0}(\xi_{h_0}))$, $P_2(\xi_{h_0},J_\infty(\xi_{h_0}))$, $P_3(\xi_\infty,J_\infty(\xi_\infty))$. Then, we have that:  
\begin{equation}\label{XiXiInf-1}
0<\xi_\infty-\xi_{h_0}=\frac{J_\infty(\xi_{h_0})-J_{h_0}(\xi_{h_0})}{\tan(\alpha_{h_0})},
\end{equation}
where $\alpha_{h_0}$ is the inner angle of $T$ with vertex $P_3$. Let also be $\tan(\widetilde{\alpha}_{h_0})$, $\widetilde{\alpha}_{h_0}\in(0,\pi)$, the slope of the secant line to the graph of $J_\infty$ which contains the points $P_2$ and $P_3$, and let be $\tan(\widetilde{\beta}$, $\beta\in(0,\pi)$, the slope of the tangent line at $P_3$ of the same graph. Since $\xi_{h_0}<\xi_\infty$ and $J_\infty$ is a decreasing convex function in $[\xi_{h_0^{\star\star}},\nu_\infty]$, we have that:
\begin{equation*}
\widetilde{\alpha}_{h_0}<\beta
\hspace{1cm}\text{and}\hspace{1cm}
\widetilde{\alpha}_{h_0},\,\beta\in\left(\frac{\pi}{2},\pi\right).
\end{equation*}
Then:
\begin{equation}\label{AlphaBeta}
\tan(\alpha_{h_0})>\tan(-\beta)=-J'_\infty(\xi_\infty)>0,
\end{equation}
since $\alpha_{h_0}=\pi-\widetilde{\alpha}_{h_0}$. Therefore, it follows from (\ref{JJInf-2}), (\ref{XiXiInf-1}) and (\ref{AlphaBeta}) that:
\begin{equation}\label{XiXiInf-2}
0<\xi_\infty-\xi_{h_0}<\frac{\mathcal{J}(\xi_{h_0})}{-J_\infty'(\xi_\infty)}\frac{1}{h_0}\quad\forall\,h_0\geq h_0^{\star\star}.
\end{equation}
We know from \cite{Ta2015-c} that $\mathcal{J}_1$ can be considered as given by:
\begin{equation*}
\mathcal{J}_1(x)=\frac{k}{\sqrt{\pi\alpha_1}}\frac{\exp(-x^2)}{\erf^2(x)}\left(x+\gamma(1-\epsilon)\frac{\sqrt{\pi}}{D_\infty}\frac{1}{F_{1,h_0^\star}(x)}\right)\frac{1}{F_{1,\infty}(x)F_{1,h_0^\star}(x)}.
\end{equation*}
Then:
\begin{equation}\label{JJ2Cota}
\begin{split}
\mathcal{J}_2(\xi_{h_0})
&=\frac{F_{1,\infty}(\xi_{h_0})}{G^2_\infty(\xi_{h_0})}\frac{k}{\sqrt{\pi\alpha_1}}\frac{\exp(-\xi_{h_0}^2)}{\erf^2(\xi_{h_0})}\left(\xi_{h_0}+\gamma(1-\epsilon)
\frac{\sqrt{\pi}}{D_\infty}\frac{1}{F_{1,h_0^\star}(\xi_{h_0})}\right)\frac{1}{F_{1,h_0^\star}(\xi_{h_0})}<\mathcal{M}_1,
\end{split}
\end{equation}
where $\mathcal{M}_1$ is the number defined by:
\begin{equation*}\label{MM1}
\mathcal{M}_1=\frac{k}{\sqrt{\pi\alpha_1}}\frac{F_{1,\infty}(\xi_{h_0^{\star\star}})}{G^2_\infty(\xi_{h_0^{\star\star}})F_{1,h_0^\star}(\nu_\infty)\erf^2(\xi_{h_0^{\star\star}})}\left(\nu_\infty+
\frac{\gamma(1-\epsilon)\sqrt{\pi}}{D_\infty}\frac{1}{F_{1,h_0^\star}(\nu_\infty)}\right)>0.
\end{equation*}
We also have that:
\begin{equation}\label{JJ3Cota}
\frac{\theta_0\sqrt{k_2c_2}}{D_\infty\sqrt{k_1c_1}}\mathcal{J}_3(x)<\mathcal{M}_2,
\end{equation}
where $\mathcal{M}_2$ is the number defined by:
\begin{equation*}\label{MM2}
\mathcal{M}_2=\frac{\theta_0L\gamma k_1\sqrt{k_2c_2}}{2D^2_\infty\sqrt{k_1c_1\alpha_2}}\frac{\exp(\nu_\infty^2)}{G_\infty(\xi_{h_0^{\star\star}})}.
\end{equation*}
Then, it follows from (\ref{XiXiInf-2}), (\ref{JJ2Cota}) and (\ref{JJ3Cota})  that:
\begin{equation}
0<\xi_\infty-\xi_{h_0}<\frac{\mathcal{M}}{h_0}\quad\forall\,h_0\geq h_0^{\star\star},
\end{equation}
where $\mathcal{M}$ is the number defined by:
\begin{equation*}\label{MM}
\mathcal{M}=\frac{\mathcal{M}_1+\mathcal{M}_2}{-J_\infty'(\xi_\infty)}>0.
\end{equation*}
Then, (\ref{xiOrden}) holds. 

Finally, we have that:
\begin{equation*}
\begin{split}
\left|\mu_{h_0}-\mu_{\infty}\right|&\leq
\sqrt{\alpha_{12}}\left(\frac{\mathcal{M}}{h_0}+\frac{\gamma\sqrt{\pi}}{2D_\infty}\left(\exp(\xi_\infty^2)\erf(\xi_\infty)-\exp(\xi_{h_0}^2)\erf(\xi_{h_0})\right)+\frac{\gamma k_1\exp(\nu_\infty^2)}{2D_\infty\sqrt{\alpha_1}}\frac{1}{h_0}\right)\\
&\leq\frac{\mathcal{M}_3}{h_0}\quad\forall\,h_0\geq h_0^{\star\star},
\end{split}
\end{equation*}
where $\mathcal{M}_3$ is the number defined by:
\begin{equation*}
\mathcal{M}_3=\sqrt{\alpha_{12}}\left(\mathcal{M}\left(1+\frac{\gamma\sqrt{\pi}L_6}{2D_\infty}\right)+\frac{\gamma k_1\exp(\nu_\infty^2)}{2D_\infty\sqrt{\alpha_1}}\right)>0
\end{equation*}
and $L_6$ is a Lipschitz constant in $\left[\xi_{h_0^{\star\star}},\nu_\infty\right]$ for the function $F_6$ defined by:
\begin{equation*}
F_6(x)=\exp(x^2)\erf(x),\quad x>0.
\end{equation*}
\end{proof}

We are now in a position to prove Theorem \ref{ThOrden}:

\begin{proof}
(of Theorem \ref{ThOrden})\\
Let be $x>0$ and $t>0$. We have that:
\begin{equation*}
\begin{split}
\left|\theta_{1,h_0}(x,t)-\theta_{1,\infty}(x,t)\right|&\leq
\frac{D_\infty}{1+\frac{\sqrt{\alpha_1\pi}}{k_1}\erf(\xi_{h_0^{\star\star}})}\frac{1}{h_0}\left[1+\frac{1}{\erf(\xi_{h_0})}\left(\frac{h_0\sqrt{\alpha_1\pi}}{k_1}\left(\erf(\xi_\infty)-\erf(\xi_{h_0})\right)+1\right)\right]\\
&\leq\frac{\mathcal{M}_{\theta_1}}{h_0}\quad\forall\,h_0\geq h_0^{\star\star},
\end{split}
\end{equation*}
where $\mathcal{M}_{\theta_1}$ is the number defined by:
\begin{equation*}
\mathcal{M}_{\theta_1}=\frac{D_\infty}{1+\frac{\sqrt{\alpha_1\pi}}{k_1}\erf(\xi_{h_0^{\star\star}})}\left[1+\frac{1}{\erf(\xi_\infty)}\left(\frac{L\sqrt{\alpha_1\pi}}{k_1}\mathcal{M}+1\right)\right]>0
\end{equation*}
and $L$ is a Lipschitz constant for the error function. Then (\ref{theta1Orden}) holds.

We also have that:
\begin{equation*}
\begin{split}
\left|\theta_{2,h_0}(x,t)-\theta_{2,\infty}(x,t)\right|\leq
\frac{2\theta_0}{\erfc^2(\mu_\infty)}\left(\erf(\mu_\infty)-\erf(\mu_{h_0})\right)\leq \frac{\mathcal{M}_{\theta_{2}}}{h_0}\quad\forall\,h_0\geq h_0^{\star\star},
\end{split}
\end{equation*}
where $\mathcal{M}_{\theta_2}$ is the number defined by:
\begin{equation*}
\mathcal{M}_{\theta_2}=\frac{2\theta_0L\mathcal{M}_3}{\erfc^2(\mu_\infty)}>0.
\end{equation*}
Therefore, (\ref{theta2Orden}) also holds. 

The proofs of (\ref{sOrden}), (\ref{rOrden}) follow straightforward from (\ref{xiOrden}) and (\ref{muOrden}).
\end{proof}

\subsection*{Conclusions}
In this article we have considered a two-phase solidification process for a one-dimensional semi-infinite material. We have assumed that the phase-change process starts from a constant bulk temperature imposed in the vicinity of the boundary and we have modelled it through a convective condition. Regarding the interface between solid and liquid phases, we have assumed the existence of a mushy zone and we have represented it by following the model of Solomon, Wilson and Alexiades. For this problem we have obtained a similarity solution that depends on a dimensionless parameter, which is defined as the only one solution to a transcendental equation. Moreover, we have analysed the relationship between the problems with convective and temperature boundary conditions and we have established when both problems are equivalent. We have also proved that the solution to the problem with the temperature boundary condition can be obtained from the solution to a problem with a convective boundary condition when the heat transfer coefficient at the fixed boundary goes to infinity and we have given the order of that convergence.

\subsection*{Acknowledgements}
This paper has been partially sponsored by the Project PIP No. 0534 from CONICET-UA (Rosario, Argentina) and AFOSR-SOARD Grant FA 9550-14-1-0122.

\bibliographystyle{plain}
\bibliography{References_2016-08-25}

\end{document}